\newtheorem{thm}{Theorem}
\newtheorem{cor}{Corollary}
\newtheorem{prop}{Proposition}
\newtheorem{lem}{Lemma}
\newcommand{\Rset}{\mathbb{R}}
\newcommand{\randxi}{\tilde{\pmb{\xi}}}
\newcommand{\randxiT}{\tilde{\pmb{\xi}}{}^T}
\begin{document}

\title{Wasserstein robust combinatorial optimization problems} 

\author[1]{Marcel Jackiewicz}
\author[1]{Adam Kasperski}
\author[1]{Pawe{\l} Zieli\'nski\footnote{Corresponding author}}

\affil[1]{
Wroc{\l}aw  University of Science and Technology, Wroc{\l}aw, Poland\\
            \texttt{\{marcel.jackiewicz,adam.kasperski,pawel.zielinski\}@pwr.edu.pl}}

\date{}

\maketitle

\begin{abstract}
This paper discusses a class of combinatorial optimization problems with uncertain costs in the objective function. It is assumed that a sample of the cost realizations is available, which defines an empirical probability distribution for the random cost vector. A Wasserstein ball, centered at the empirical distribution, is used to define an ambiguity set of probability distributions. A solution minimizing the Conditional Value at Risk for a worst probability distribution in the Wasserstein ball is computed. The complexity of the problem is investigated. Exact and approximate solution methods for various support sets are proposed. Some known results for the Wasserstein robust shortest path problem are generalized and refined.
\end{abstract}

\noindent\textbf{Keywords:} uncertainty modelling; distributionally robust optimization; Wasserstein distance

\section{Introduction}

In a traditional combinatorial optimization problem, we seek an object composed of some elements of a ground set, which minimizes a linear objective function. Unfortunately, most combinatorial optimization problems are NP-hard, except for some special cases such as fundamental network problems (see, for example~\cite{AMO93, PS98}). In many practical applications, the exact values of the element costs are unknown. The method of handling this uncertainty mainly depends on the knowledge available. In many situations, it is only possible to specify an uncertainty set $\mathcal{U}$ which contains possible values of uncertain costs. 
Then, some robust criteria, such as the min-max or the min-max regret, can be used to compute a solution~\cite{KY97}. However, additional knowledge in the scenario set is available in many situations. For example, some bounds on mean and covariance matrix~\cite{DY10}, or a family of confidence sets for the random cost vector can be specified~\cite{WKS14}. Also, a sample of realizations of the random cost vector can be available. In all of these cases, one can define a set of admissible probability distributions for the random cost vector.  Using a robust distributionally framework, we can minimize some risk measure for the worst probability distribution in this set.  A risk measure commonly used in stochastic optimization is the Conditional Value at Risk (CVaR)~\cite{P00, RU00}. The CVaR is a convex coherent risk measure that has a nice interpretation and possesses many favorable computational properties. 

When a sample of the random cost vector is available, then the Wasserstein metric~\cite{KR58} can be used to define a set of admissible probability distributions. The Wasserstein metric defines a distance between two probability distributions on the same support set. A Wasserstein ball contains then all probability distributions that are of some prescribed distance to the empirical distribution, which is defined by the available sample of the random cost vector. The Wasserstein ball contains all admissible probability distributions, and the robust distributionally framework can be used to compute a solution~\cite{KE18, GK23}. The approach based on the Wasserstein distance can be attractive when the sample size is small. In particular, it may outperform the approach based on the sample average approximation~\cite{KE18}.

 Our paper is motivated by the paper by Wang et al.~\cite{WYSZ19}, in which the Wasserstein robust approach has been applied to the shortest path problem. The authors provided in~\cite{WYSZ19} various mathematical programming formulations and showed the results of some computational tests. In this paper, we generalize the results obtained in~\cite{WYSZ19} to all combinatorial problems. In particular, we generalize the method of computing a worst probability distribution for a given solution and the mathematical programming formulation for the unrestricted support set. We also consider the problem with more restricted support sets. In the simplest case, lower and upper bounds for the costs are known, and the support set is a hyperrectangle  in~$\Rset^n_{+}$. We refine the results for such a support set shown in~\cite{WYSZ19}. Generally, any closed, convex, and bounded support set (for example, an ellipsoid in $\Rset^n_{+}$) can be used. For this case, we propose a row generation algorithm, which can be applied when the sample size is not large. We also propose a more efficient method of constructing a solution consisting of solving the CVaR-optimization problem for an appropriately distorted sample. We show that this algorithm can provide an approximate solution to the robust Wasserstein problem. Finally, we also investigate the computational complexity of the problem. We show that it is strongly NP-hard for most basic problems, particularly for the shortest path problem discussed in~\cite{WYSZ19}. We also present the results of some computational tests. This paper is a significantly extended version of the results presented in~\cite{JKZ23}.

 This paper is organized as follows. In Section~\ref{sec1}, we present the problem formulation. We also show how the problem can be reformulated in a more tractable way, based on a characterization of the worst probability distribution. In Section~\ref{sec3}, we investigate the complexity of the problem. We prove that the Wasserstein robust problem is strongly NP-hard for most basic problems (particularly for the shortest path), except when CVaR is the expectation. In Section~\ref{sec4}, we propose various methods of solving the problem. In Section~\ref{sec4_1}, we discuss the case of unrestricted support set, i.e. when any realization of cost in~$\Rset^n_{+}$ can occur. We show a mathematical programming formulation for this case and propose some exact and approximate methods of solving it. In Section~\ref{sec4_2}, we consider a simple case of restricted support being a hyperrectangle in $\Rset^n_{+}$. We refine the known results for this support set. In Section~\ref{sec4_3}, we consider the problem with an arbitrary closed, convex, and bounded support set, and we propose a row generation algorithm for that case. In Section~\ref{sec4_4}, we construct a more efficient approximation algorithm and prove its approximation guarantee. Finally, Section~\ref{sec5} contains the results of some computational tests for the 0-1 Knapsack problem.

\section{Problem formulation}
\label{sec1}

Consider the following combinatorial optimization problem:
$$\mathcal{P}: \;\; \min_{\pmb{x}\in \mathcal{X}} \;\pmb{\xi}^T\pmb{x},$$
where $\mathcal{X}\subseteq \{0,1\}^n$ is a set of feasible solutions and $\pmb{\xi}\in \Rset_{+}^n$ is a given nonnegative cost vector. Typically,~$\mathcal{X}$ is described by a system of linear constraints of the form $\pmb{A}\pmb{x}\geq \pmb{b}$, for some fixed matrix $\pmb{A}\in \Rset^{m\times n}$ and vector $\pmb{b}\in \Rset^m$, with $\pmb{x}\in \{0,1\}^n$.
 In general, problem $\mathcal{P}$ is NP-hard, and only some special cases are known to be polynomially solvable. One such a particular case is when the matrix $~\pmb{A}$ is totally unimodular, for example, when $\mathcal{P}$ is the \textsc{Shortest Path} or the \textsc{Minimum Assignment}  (see~\cite{AMO93, PS98}).

In practice, the true realization of the cost vector $\pmb{\xi}$ is often unknown before the optimal solution $\pmb{x}$ to $\mathcal{P}$ must be determined.  It should be rather regarded as  a random vector
$\randxi$ with a probability distribution $\mathbb{P}$ supported on $\Xi\subseteq \Rset^n_+$. In this paper, we assume that the support set $\Xi$ is convex. If $\Xi=\Rset^n_{+}$, then we say that the support set is unrestricted. However, $\Xi$ can be restricted to a closed and bounded set in many practical situations. In this case, $\Xi$ coincides with an uncertainty set commonly used in robust optimization~\cite{BN09}.
Vector $\randxi$ induces a random cost $\randxiT\pmb{x}$ of  solution $\pmb{x}\in \mathcal{X}$. To choose a solution, a risk measure that assigns a real value to random solution cost should be used~\cite{KW94}. In this paper, we will apply the following Conditional Value at Risk (Expected Shortfall)~\cite{P00,RU00}, which is commonly used in stochastic optimization (see, e.g.~\cite{KW94}):
\begin{equation}
\label{cvare}
{\rm CVaR}^{\alpha}_{\mathbb{P}}[\randxiT\pmb{x}]=\inf\left\{t\in \Rset : t + \frac{1}{\alpha} {\rm E}_{\mathbb{P}}([\randxiT\pmb{x}-t]_{+})\right\}=\inf\left\{t \in \Rset : t + \frac{1}{\alpha} \int_{\Xi}[\pmb{\xi}^T\pmb{x}-t]_{+}\mathbb{P}({\rm d}\pmb{\xi})\right\},
\end{equation}
where $\alpha\in (0,1]$ is a fixed risk level,
 ${\rm E}_{\mathbb{P}}[\cdot]$ is the expected value with respect to the probability distribution~$\mathbb{P}$
 and $[x]_{+}=\max\{0,x\}$.
Note that for $\alpha=1$, the Conditional Value at Risk is the expected solution cost ${\rm E}_{\mathbb{P}}[\randxiT\pmb{x}]$. 
Given a probability distribution $\mathbb{P}$ for $\randxi$, we seek a solution minimizing the Conditional Value at Risk at a given risk level $\alpha\in (0,1]$, that is, we solve the following problem:
\begin{equation}
\label{mincv}
\text{CVaR}~\mathcal{P}:\;\;
\begin{array}{lll}
	\min & {\rm CVaR}^{\alpha}_{\mathbb{P}}[\randxiT\pmb{x}]\\
		& \pmb{A}\pmb{x}\geq \pmb{b}, \\
		& \pmb{x}\in \{0,1\}^n.
	\end{array}
\end{equation}

In practical applications, the true probability distribution $\mathbb{P}$ in problem~(\ref{mincv}) is rarely known exactly. Suppose that we have sample $\hat{\pmb{\xi}}_1,\dots,\hat{\pmb{\xi}}_N$ of $\randxi$, which induces the empirical probability distribution 
\begin{equation}
\label{dird}
\widehat{\mathbb{P}}^N:=\frac{1}{N}\sum_{i=1}^N \delta_{\hat{\pmb{\xi}}_i},
\end{equation}
where $\delta_{\pmb{\xi}_i}$ is a Dirac distribution concentrating unit mass at $\pmb{\xi}_i\in\Xi$. In this paper we will use a more convenient notation $\widehat{\mathbb{P}}^N:=(\hat{\pmb{\xi}}_1,\dots,\hat{\pmb{\xi}}_N)$, so we will identify $\widehat{\mathbb{P}}^N$ with a vector of realizations $(\hat{\pmb{\xi}}
_1,\dots,\hat{\pmb{\xi}}_N)$. We will generally use the notation $\mathbb{P}^N:=(\pmb{\xi}_1,\dots,\pmb{\xi}_N)$ to denote a discrete probability distributions in $\Xi$, defined analogously as~(\ref{dird}). The indexed vectors $\pmb{\xi}_1,\dots,\pmb{\xi}_N\in \Rset^n_{+}$ are realizations of the random vector $\randxi$. We point out that some realizations in $\mathbb{P}^N$ can be repeated, which leads to a nonuniform discrete probability distribution.

Given two probability distribution $\mathbb{Q}_1$ and $\mathbb{Q}_2$ supported on $\Xi$, we define the Wasserstein distance between them as follows~\cite{KR58}:
	$$d_W(\mathbb{Q}_1,\mathbb{Q}_2)=\inf \left\{\int_{\Xi \times \Xi} ||\pmb{\xi}_1-\pmb{\xi}_2||_q \Pi({\rm d} \pmb{\xi}_1, {\rm d}\pmb{\xi}_2): \overset{\Pi \text{ is a joint probability distribution in }\Xi \times \Xi}{\footnotesize \text{with marginals } \tiny \mathbb{Q}_1, \mathbb{Q}_2} \right\},$$
	where $||\pmb{v}||_q=(\sum_{i=1}^n |v_i|^q)^{1/q}$ is the $q$-norm in $\Rset^n$ for $q\in \overline{\Rset}_{\geq 1}=
	\Rset_{\geq 0}\cup\{\infty\}$.
Using the Wasserstein distance, we can define the following ambiguity set of probability distributions (also called a Wasserstein ball)~\cite{KE18}:
$$\mathbb{B}_{\epsilon}(\widehat{\mathbb{P}}^N)=\{\mathbb{Q}\in \mathcal{M}(\Xi): d_W(\widehat{\mathbb{P}}^N,\mathbb{Q})\leq \epsilon\},$$
where $\mathcal{M}(\Xi)$ is the set of all probability distributions in $\Xi$ and $\epsilon\geq 0$ is a given constant. The following theorem provides the  foundations for establishing a finite sample guarantees:
\begin{thm}[\cite{KE18, FG14}]
	Assume that there exists $a>1$ such that
		$$A=\int_{\Xi}{\rm exp}(||\pmb{\xi}||^a)\mathbb{P}({\rm d} \pmb{\xi})<\infty.$$
	Then the true probability distribution $\mathbb{P}$ belongs to $\mathbb{B}_{\epsilon}(\widehat{\mathbb{P}}^N)$ with the confidence $1-\beta$ for a fixed $\beta\in(0,1)$, where
	$$
		\epsilon=\left\{\begin{array}{llll}
							\left(\frac{\log(c_1 \beta^{-1})}{c_2N}\right)^{1/\max\{n,2\}} & \text{if} & N\geq \frac{\log(c_1\beta^{-1}}{c_2}),\\
							\left(\frac{\log(c_1 \beta^{-1})}{c_2N}\right)^{1/a} & \text{if} & N< \frac{\log(c_1\beta^{-1}}{c_2}),
					\end{array}
						\right.
	$$
	and $c_1,c_2$ are positive constants that depend on $A$, $a$ and $n$.
	
\end{thm}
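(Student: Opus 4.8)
The plan is to derive the stated radius $\epsilon$ directly from a finite-sample concentration inequality for the Wasserstein distance between the empirical distribution $\widehat{\mathbb{P}}^N$ and the true distribution $\mathbb{P}$, so that the heavy analytic work is carried out by the cited measure-concentration result of Fournier and Guillin. The key ingredient is the following estimate: under the light-tail hypothesis $A=\int_{\Xi}\exp(\|\pmb{\xi}\|^a)\mathbb{P}(\mathrm{d}\pmb{\xi})<\infty$ with $a>1$, there are constants $c_1,c_2>0$ depending only on $A$, $a$ and $n$ such that the probability (over the random sample generating $\widehat{\mathbb{P}}^N$) that $d_W(\widehat{\mathbb{P}}^N,\mathbb{P})\geq\epsilon$ is bounded above by $c_1\exp(-c_2 N\epsilon^{\max\{n,2\}})$ when $\epsilon\leq 1$ and by $c_1\exp(-c_2 N\epsilon^{a})$ when $\epsilon>1$. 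The two exponents reflect two distinct mechanisms: for small radii the rate is governed by the dimension $n$ through covering-number estimates of $\Xi$, while for large radii it is the tail-decay exponent $a$ that controls the probability of an atypically large transport cost.

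Granting this bound, the remaining argument is purely algebraic. I would fix the confidence parameter $\beta\in(0,1)$ and require the failure probability to be at most $\beta$. In the small-radius regime this amounts to solving $c_1\exp(-c_2 N\epsilon^{\max\{n,2\}})=\beta$ for $\epsilon$, which gives $\epsilon=\left(\frac{\log(c_1\beta^{-1})}{c_2 N}\right)^{1/\max\{n,2\}}$; in the large-radius regime the same manipulation with exponent $a$ yields $\epsilon=\left(\frac{\log(c_1\beta^{-1})}{c_2 N}\right)^{1/a}$. Because the concentration bound changes form at $\epsilon=1$, I would determine which regime applies by testing whether the computed radius is at most $1$: the inequality $\left(\frac{\log(c_1\beta^{-1})}{c_2 N}\right)^{1/\max\{n,2\}}\leq 1$ is equivalent to $N\geq \frac{\log(c_1\beta^{-1})}{c_2}$, which is exactly the threshold separating the two cases in the statement. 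Once $d_W(\widehat{\mathbb{P}}^N,\mathbb{P})\leq\epsilon$ holds with probability at least $1-\beta$, the definition of the Wasserstein ball immediately gives $\mathbb{P}\in\mathbb{B}_{\epsilon}(\widehat{\mathbb{P}}^N)$ with confidence $1-\beta$.

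The main obstacle is entirely contained in the first step: proving the concentration inequality for $d_W(\widehat{\mathbb{P}}^N,\mathbb{P})$ is the genuinely hard part, requiring delicate control of the empirical transport cost via covering numbers together with a separate treatment of the tails to obtain exponential decay with the correct exponents. Since this result is available in the cited references, I would import it as a black box and devote the proof only to the inversion and the case distinction above; the verification that $c_1,c_2$ can be taken to depend solely on $A$, $a$ and $n$ is inherited directly from the cited estimate. The only point demanding care is making the two regimes consistent at the boundary $\epsilon=1$, which is handled precisely by the equivalence between $\epsilon\leq 1$ and the sample-size threshold $N\geq \log(c_1\beta^{-1})/c_2$.
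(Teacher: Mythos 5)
The paper gives no proof of this theorem at all: it is imported verbatim from the cited references (Esfahani--Kuhn, building on Fournier--Guillin), and your proposal reconstructs exactly the argument used there --- invoke the measure-concentration bound $\Pr[d_W(\widehat{\mathbb{P}}^N,\mathbb{P})\geq\epsilon]\leq c_1\exp(-c_2N\epsilon^{\max\{n,2\}})$ for $\epsilon\leq 1$ (resp.\ exponent $a$ for $\epsilon>1$), set the right-hand side to $\beta$, and observe that the resulting radius lies in the small-$\epsilon$ regime precisely when $N\geq\log(c_1\beta^{-1})/c_2$. This is correct and is essentially the same approach; the only point you inherit silently (as does the paper's statement) is that the Fournier--Guillin bound for $n=2$ carries a logarithmic correction, which is why the cited sources typically exclude that case or adjust the constants.
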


Given an empirical distribution $\widehat{\mathbb{P}}^N$ and $\epsilon\geq 0$, we consider the following distributionally robust problem~\cite{KE18}:

\begin{equation}
\label{mincvd00}
\textsc{Distr}~\mathcal{P}:\;\;
\begin{array}{lll}
	\min & \displaystyle \sup_{\mathbb{P}\in \mathbb{B}_{\epsilon}(\widehat{\mathbb{P}}^N)} {\rm CVaR}^{\alpha}_{\mathbb{P}}[\randxiT\pmb{x}]\\
		& \pmb{A}\pmb{x}\geq \pmb{b}, \\
		& \pmb{x}\in \{0,1\}^n.
	\end{array}
\end{equation}
If the supremum in~(\ref{mincvd00}) is attained by some probability distribution $\mathbb{P}\in \mathbb{B}_{\epsilon}(\widehat{\mathbb{P}}^N)$, then $\mathbb{P}$ is called a worst probability distribution for $\pmb{x}$.
 In this paper, we will use the following lemma, which was originally proven for the \textsc{Shortest Path} problem 
 in~\cite[Lemma~2]{WYSZ19}. We extend the result~\cite{WYSZ19} for 
 a general combinatorial problem $\mathcal{P}$.
 \begin{lem}
\label{lemworst}
For each solution $\pmb{x}\in \mathcal{X}$, there is a worst probability distribution which belongs to
$$\mathcal{B}_{\epsilon}(\widehat{\mathbb{P}}^N)=\{\mathbb{P}^N=(\pmb{\xi}_1,\dots,\pmb{\xi}_N): \sum_{i=1}^N ||\pmb{\xi}_i-\hat{\pmb{\xi}}_i||_q\leq N\epsilon,\; \pmb{\xi}_i\in \Xi, i\in [N]\},$$
where $[N]$ denotes the set $\{1,\dots,n\}$.
\end{lem}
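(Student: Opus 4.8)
The plan is to show that, for a fixed solution $\pmb{x}$, the supremum of ${\rm CVaR}^{\alpha}_{\mathbb{P}}[\randxiT\pmb{x}]$ over the full Wasserstein ball $\mathbb{B}_{\epsilon}(\widehat{\mathbb{P}}^N)$ equals the supremum over the restricted set $\mathcal{B}_{\epsilon}(\widehat{\mathbb{P}}^N)$ of \emph{discrete} distributions obtained by perturbing each sample point $\hat{\pmb{\xi}}_i$ independently to a point $\pmb{\xi}_i\in\Xi$, subject to the aggregate transport budget $\sum_{i=1}^N\|\pmb{\xi}_i-\hat{\pmb{\xi}}_i\|_q\le N\epsilon$. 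The key structural fact is that $\widehat{\mathbb{P}}^N$ places mass $1/N$ on each atom $\hat{\pmb{\xi}}_i$, so any transport plan $\Pi$ with first marginal $\widehat{\mathbb{P}}^N$ must move the mass sitting at $\hat{\pmb{\xi}}_i$ somewhere; the Wasserstein constraint is a budget on the total (averaged) displacement.

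First I would fix $\pmb{x}\in\mathcal{X}$ and use the definition~(\ref{cvare}) to write ${\rm CVaR}^{\alpha}_{\mathbb{P}}[\randxiT\pmb{x}]=\inf_{t\in\Rset}\bigl\{t+\tfrac{1}{\alpha}{\rm E}_{\mathbb{P}}[(\randxiT\pmb{x}-t)_+]\bigr\}$, and observe that the objective of the inner supremum depends on $\mathbb{P}$ only through the expectation ${\rm E}_{\mathbb{P}}[(\pmb{\xi}^T\pmb{x}-t)_+]$ for the relevant values of $t$. Since the map $\pmb{\xi}\mapsto(\pmb{\xi}^T\pmb{x}-t)_+$ is a convex function of $\pmb{\xi}$, the worst distribution will concentrate mass at extreme points of the feasible displacements. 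The main tool is the dual/transport representation of the Wasserstein ball: an arbitrary $\mathbb{Q}\in\mathbb{B}_{\epsilon}(\widehat{\mathbb{P}}^N)$ is coupled to $\widehat{\mathbb{P}}^N$ by a plan $\Pi$, and conditioning on the source atom $\hat{\pmb{\xi}}_i$ decomposes $\Pi$ into $N$ conditional distributions $\mathbb{Q}_i$ on $\Xi$, each carrying mass $1/N$, with $d_W$ reducing to $\tfrac{1}{N}\sum_i\int\|\pmb{\xi}-\hat{\pmb{\xi}}_i\|_q\,\mathbb{Q}_i({\rm d}\pmb{\xi})$.

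The crucial step is then to replace each conditional $\mathbb{Q}_i$ by a single Dirac mass without decreasing the objective while respecting the budget. Because $(\pmb{\xi}^T\pmb{x}-t)_+$ is convex in $\pmb{\xi}$ and $\|\cdot-\hat{\pmb{\xi}}_i\|_q$ is convex, I would argue that among all $\mathbb{Q}_i$ with a prescribed transport cost $\delta_i=\int\|\pmb{\xi}-\hat{\pmb{\xi}}_i\|_q\,\mathbb{Q}_i({\rm d}\pmb{\xi})$ the expectation ${\rm E}_{\mathbb{Q}_i}[(\pmb{\xi}^T\pmb{x}-t)_+]$ is maximized (or can be matched) by a Dirac at an extreme feasible point $\pmb{\xi}_i$ with $\|\pmb{\xi}_i-\hat{\pmb{\xi}}_i\|_q=\delta_i$; convexity of $\Xi$ guarantees $\pmb{\xi}_i\in\Xi$, and the constraint $\sum_i\delta_i\le N\epsilon$ becomes exactly $\sum_i\|\pmb{\xi}_i-\hat{\pmb{\xi}}_i\|_q\le N\epsilon$. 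Aggregating over $i$ shows the supremum over $\mathbb{B}_{\epsilon}$ is attained within $\mathcal{B}_{\epsilon}(\widehat{\mathbb{P}}^N)$, and compactness considerations (or a limiting argument when $\Xi$ is unbounded, where the objective is driven to its supremum along the ray maximizing $\pmb{\xi}^T\pmb{x}$ per unit transport) give attainment.

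I expect the main obstacle to be making the ``collapse each $\mathbb{Q}_i$ to a Dirac'' step fully rigorous, in particular interchanging the outer $\sup_{\mathbb{P}}$ with the inner $\inf_t$ in the CVaR definition so that the convexity-in-$\pmb{\xi}$ argument can be applied for a \emph{fixed} $t$. A clean way to handle this is to appeal to a minimax / Sion-type argument (the objective is convex in $t$ and concave, in fact linear, in $\mathbb{P}$), justifying the swap, and then to verify that the extremal $\pmb{\xi}_i$ selected for the worst-case $t$ indeed lies in the convex support $\Xi$ and satisfies the aggregate budget; the unbounded-support case $\Xi=\Rset^n_+$ needs the extra care that the supremum is still attained (or approached) by a distribution of the required discrete form.
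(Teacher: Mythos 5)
Your overall architecture --- swap the $\sup$ over $\mathbb{P}$ with the $\inf$ over $t$, then decompose the transport plan into conditionals $\mathbb{Q}_i$ each carrying mass $1/N$ --- matches the paper's (which writes the inner supremum as a linear program over transport plans and then dualizes). The genuine gap is the step you yourself flag as crucial: the claim that, for a \emph{fixed} $t$ and a prescribed per-atom budget $\delta_i=\int\|\pmb{\xi}-\hat{\pmb{\xi}}_i\|_q\,\mathbb{Q}_i({\rm d}\pmb{\xi})$, the expectation ${\rm E}_{\mathbb{Q}_i}[(\pmb{\xi}^T\pmb{x}-t)_+]$ is maximized, or even matched, by a single Dirac. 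Convexity works \emph{against} you here: by Jensen's inequality, collapsing $\mathbb{Q}_i$ to its barycenter can only \emph{decrease} ${\rm E}_{\mathbb{Q}_i}[(\pmb{\xi}^T\pmb{x}-t)_+]$, and pushing the Dirac out to the boundary of the budget ball does not recover the loss. Concretely, if $\hat{\pmb{\xi}}_i^T\pmb{x}<t$, take $\mathbb{Q}_i$ with mass $1-p$ at $\hat{\pmb{\xi}}_i$ and mass $p$ at $\hat{\pmb{\xi}}_i+(\delta_i/p)\pmb{d}$, where $\pmb{d}$ is a unit-$q$-norm direction with $\pmb{d}^T\pmb{x}=\|\pmb{x}\|_{q'}$; its transport cost is $\delta_i$, yet its objective tends to $\|\pmb{x}\|_{q'}\delta_i$ as $p\to 0$, whereas the best single Dirac at distance $\delta_i$ attains only $\bigl[\hat{\pmb{\xi}}_i^T\pmb{x}-t+\|\pmb{x}\|_{q'}\delta_i\bigr]_+$, which is strictly smaller. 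This is exactly the atom-splitting phenomenon of worst-case Wasserstein distributions (Gao--Kleywegt): for fixed $t$ the supremum of ${\rm E}[(\randxiT\pmb{x}-t)_+]$ over the ball is in general strictly larger than its supremum over $\mathcal{B}_{\epsilon}(\widehat{\mathbb{P}}^N)$, so no pointwise-in-$t$ domination argument of the kind you describe can succeed.

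Consequently, the reduction to Diracs cannot be carried out conditional-by-conditional at fixed $t$; any valid proof must exploit the interaction with the outer $\inf_t$ (i.e., the specific structure of CVaR) rather than convexity of $[\cdot]_+$ alone. The paper's own proof does not attempt your collapse: after the minimax swap it takes the Lagrangian dual of the transport LP, arriving at $\inf_{\lambda\ge 0}\sup_{\pmb{\xi}_1,\dots,\pmb{\xi}_N\in\Xi}\frac{1}{N}\sum_{i}([\pmb{\xi}_i^T\pmb{x}-t]_+-\lambda\|\pmb{\xi}_i-\hat{\pmb{\xi}}_i\|_q)+\lambda\epsilon$, and then identifies this with the constrained maximum over $\mathcal{B}_{\epsilon}(\widehat{\mathbb{P}}^N)$ --- that is, it relocates the same difficulty into a zero-duality-gap assertion for a convex \emph{maximization} problem instead of confronting it as a measure-collapsing step. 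To salvage your route you would need either that duality machinery or the structural results of Esfahani--Kuhn and Gao--Kleywegt (a worst-case distribution splits at most one atom into two pieces), together with an argument that the split is harmless once $t$ is optimized; the bare convexity argument you propose establishes the inequality in the wrong direction.
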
 
\begin{proof}
See Appendix~\ref{dod}.
\end{proof}
Lemma~\ref{lemworst} shows that the worst probability distribution for a given solution $\pmb{x}$ is a discrete one and belongs to the closed, bounded and convex set $\mathcal{B}_{\epsilon}(\widehat{\mathbb{P}}^N)$. The set $\mathcal{B}_{\epsilon}(\widehat{\mathbb{P}}^N)$ has a nice description, which we will exploit in the next part of the paper. Using Lemma~\ref{lemworst}, we can represent~(\ref{mincvd00}) as the following optimization problem:

\begin{equation}
\label{pf}
\textsc{Distr}~\mathcal{P}:\;\;
\begin{array}{lll}
	\min & \displaystyle \max_{\mathbb{P}^N\in\mathcal{B}_{\epsilon}(\widehat{\mathbb{P}}^N)} {\rm CVaR}^{\alpha}_{\mathbb{P}^N}[\randxiT\pmb{x}]\\
		& \pmb{A}\pmb{x}\geq \pmb{b}, \\
		& \pmb{x}\in \{0,1\}^n.
	\end{array}
\end{equation}
 
 In the next part of this paper, we will investigate the properties of the $\textsc{Distr}~\mathcal{P}$ problem, using the formulation~(\ref{pf}).

\section{Minimizing the Conditional Value at Risk}
\label{sec3}

Observe that $\textsc{Distr}~\mathcal{P}$ reduces to $\textsc{CVaR}~\mathcal{P}$ for $\mathbb{P}=\widehat{\mathbb{P}}^N$ when $\epsilon=0$.
In this section we will focus on solving $\textsc{CVaR}~\mathcal{P}$, i.e. on minimizing the Conditional Value at Risk for a given discrete probability distribution $\mathbb{P}^N=(\pmb{\xi}_1,\dots,\pmb{\xi}_N)$. We will show some complexity and approximation results, which will be used to characterize the computational properties of the more general $\textsc{Distr}~\mathcal{P}$ problem.  The $\textsc{CVaR}~\mathcal{P}$ problem for $\mathbb{P}^N=(\pmb{\xi}_1,\dots,\pmb{\xi}_N)$ can be represented as the following mixed integer program:
 \begin{equation}
\label{mincv1}
\begin{array}{lll}
	\min & \displaystyle t+\frac{1}{\alpha N}\sum_{i=1}^N u_i\\
		& \pmb{\xi}_i^T\pmb{x}-t\leq u_i, & i\in [N],\\
		& \pmb{A}\pmb{x}\geq \pmb{b}, \\
		& \pmb{x}\in \{0,1\}^n,\\
		& u_i\geq 0 & i\in [N].
	\end{array}
\end{equation}

Here and subsequently, we will make use of the fact that the Conditional Value at Risk is a special case of the Ordered Averaging Aggregation (OWA) criterion introduced in~\cite{YA88}. Let $\pmb{w}=(w_1,\dots,w_N)$ be a vector of nonnegative weights such that $w_1+\dots+w_N=1$. The OWA criterion for $\mathbb{P}^N=(\pmb{\xi}_1,\dots,\pmb{\xi}_N)$  is defined as follows:
\begin{equation}
\label{owadef}
{\rm Owa}^{\pmb{w}}_{\mathbb{P}^N}[\randxiT\pmb{x}]=\sum_{i=1}^N w_i \pmb{\xi}_{\sigma(i)}^T\pmb{x},
\end{equation}
where $\sigma$ is a permutation of~$[N]$ such that
$\pmb{\xi}^T_{\sigma(1)}\pmb{x}\geq \pmb{\xi}^T_{\sigma(2)}\pmb{x}\geq \dots\geq \pmb{\xi}^T_{\sigma(N)}\pmb{x}$.

\begin{prop}
\label{propowa}
Assume that $\alpha\in (\frac{l-1}{N}, \frac{l}{N}]$ for $l\in [N]$. Then
$${\rm CVaR}^{\alpha}_{\mathbb{P}^N}[\randxiT\pmb{x}]={\rm Owa}^{\pmb{w}}_{\mathbb{P}^N}[\randxiT\pmb{x}],$$
where 
\begin{equation}
\label{owaw}
\pmb{w}=\left\{\begin{array}{lll}
	(1,0,\dots,0) & \text{if} & \alpha< \frac{1}{N}, \\
(\underbrace{\frac{1}{\alpha N}, \dots, \frac{1}{\alpha N}, 1-\frac{l-1}{\alpha N}}_{l \text{ positive weights}},\underbrace{0,\dots,0}_{N-l \text{ weights}}) & \text{if} & \alpha\geq \frac{1}{N}.
\end{array}\right.
\end{equation}
\end{prop}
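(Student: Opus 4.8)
The plan is to use the variational definition of CVaR in~(\ref{cvare}) and reduce the problem to minimizing a one-dimensional convex piecewise-linear function over~$t$. Fix a solution $\pmb{x}\in\mathcal{X}$ and write $c_i:=\pmb{\xi}_i^T\pmb{x}$ for $i\in[N]$; let $\sigma$ be the permutation sorting these values in non-increasing order, so that $y_k:=\pmb{\xi}_{\sigma(k)}^T\pmb{x}=c_{\sigma(k)}$ satisfies $y_1\geq y_2\geq\dots\geq y_N$. Since $\mathbb{P}^N$ places mass $\tfrac1N$ on each $\pmb{\xi}_i$, the expectation in~(\ref{cvare}) is a finite average and
$$
{\rm CVaR}^{\alpha}_{\mathbb{P}^N}[\randxiT\pmb{x}]=\inf_{t\in\Rset}g(t),\qquad g(t):=t+\frac{1}{\alpha N}\sum_{i=1}^N[c_i-t]_+ .
$$
The function $g$ is convex and piecewise linear, with breakpoints precisely at the values $y_1,\dots,y_N$, so its infimum is attained and it suffices to locate a minimizing breakpoint.

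To identify it, I would examine the derivative of $g$ on the open intervals between consecutive breakpoints. There $g'(t)=1-\tfrac{1}{\alpha N}k(t)$, where $k(t):=|\{i:c_i>t\}|$ is the (non-increasing, integer-valued) number of costs lying strictly above~$t$; it decreases from $N$ to $0$ as $t$ grows. Consequently $g$ is increasing where $k(t)<\alpha N$ and decreasing where $k(t)>\alpha N$. Invoking the hypothesis $\alpha N\in(l-1,l]$, on the interval just above $y_l$ one has $k(t)=l-1<\alpha N$, whence $g'>0$, while just below $y_l$ one has $k(t)=l$, whence $g'=1-\tfrac{l}{\alpha N}\leq0$; therefore $g$ attains its minimum at $t^\ast=y_l$. (When $\alpha N=l$ the function is flat on $[y_{l+1},y_l]$, and $y_l$ remains a minimizer; when $\alpha<\tfrac1N$ we have $l=1$ and $t^\ast=y_1$, the maximum cost.)

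It then remains to evaluate $g$ at $t^\ast=y_l$. Since $[c_i-y_l]_+=y_k-y_l$ for the $l-1$ largest costs and vanishes otherwise,
$$
g(y_l)=y_l+\frac{1}{\alpha N}\sum_{k=1}^{l-1}(y_k-y_l)=\sum_{k=1}^{l-1}\frac{1}{\alpha N}\,y_k+\Bigl(1-\frac{l-1}{\alpha N}\Bigr)y_l=\sum_{k=1}^{N}w_k\,\pmb{\xi}_{\sigma(k)}^T\pmb{x},
$$
with $\pmb{w}$ exactly the weight vector in~(\ref{owaw}), which by~(\ref{owadef}) equals ${\rm Owa}^{\pmb{w}}_{\mathbb{P}^N}[\randxiT\pmb{x}]$. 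A final sanity check confirms that $\pmb{w}$ is a legitimate OWA vector: $w_l=1-\tfrac{l-1}{\alpha N}>0$ because $\alpha N>l-1$, and $\sum_{k=1}^{l}w_k=\tfrac{l-1}{\alpha N}+1-\tfrac{l-1}{\alpha N}=1$.

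The main obstacle I anticipate is the boundary bookkeeping rather than any deep idea: one must treat the degenerate cases $\alpha=l/N$ (where the minimizing set of $t$ is a whole segment, though the optimal value is unchanged) and $\alpha<1/N$ (which forces $l=1$) consistently, and one must argue that possible ties among the $c_i$ cause no difficulty --- this is automatic because $g$ depends only on the multiset of cost values, and the minimizer criterion above is phrased solely through the counting function $k(t)$. An equivalent, perhaps slicker, route is to invoke the dual representation of CVaR, under which ${\rm CVaR}^{\alpha}_{\mathbb{P}^N}[\randxiT\pmb{x}]=\max\{\sum_i p_i c_i:\sum_i p_i=1,\ 0\leq p_i\leq\tfrac{1}{\alpha N}\}$; the greedy optimum of this linear program loads weight $\tfrac{1}{\alpha N}$ on the $l-1$ largest costs and the residual $1-\tfrac{l-1}{\alpha N}$ on the $l$-th, reproducing~(\ref{owaw}) directly.
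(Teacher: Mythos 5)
Your argument is correct, and it reaches the weight vector~(\ref{owaw}) by a genuinely different route from the paper. The paper also starts from the variational formula~(\ref{cvare}), but then rewrites ${\rm CVaR}^{\alpha}_{\mathbb{P}^N}[\randxiT\pmb{x}]$ as the linear program~(\ref{f00}), passes to its dual~(\ref{f01}) --- $\max\sum_i \pmb{\xi}_i^T\pmb{x}\,w_i$ subject to $\sum_i w_i=1$, $0\leq w_i\leq \tfrac{1}{\alpha N}$ --- solves that dual greedily, and invokes strong LP duality; the weights in~(\ref{owaw}) appear there as an optimal dual solution. You instead stay on the primal side and minimize the one-dimensional convex piecewise-linear function $g(t)$ directly, locating the minimizing breakpoint $t^\ast=y_l$ via the counting function $k(t)$ and then evaluating $g(y_l)$. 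Your route is more elementary (no duality is needed) and has the side benefit of exhibiting the minimizer $t^\ast$ explicitly, which identifies the Value at Risk; the paper's dual route is slightly shorter and makes the feasibility of $\pmb{w}$ for the dual polytope (hence the OWA interpretation, and the monotonicity $w_1\geq\dots\geq w_N$ used later in the paper) immediate. The closing remark in your proposal about the dual representation is in fact exactly the paper's proof. One small imprecision: just above $y_l$ one has $k(t)\leq l-1$ rather than $k(t)=l-1$ when ties occur at the value $y_l$, but since only the bound $k(t)\leq l-1<\alpha N$ is needed for $g'>0$ there, and your tie discussion covers this, the argument stands.
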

\begin{proof}
	Fix $\pmb{x}\in \mathcal{X}$ and $\mathbb{P}^N=(\pmb{\xi}_1,\dots,\pmb{\xi}_N)$. Assume that $\pmb{\xi}^T_{1}\pmb{x}\geq \pmb{\xi}^T_{2}\pmb{x}\geq \dots\geq \pmb{\xi}^T_{N}\pmb{x}$.
	Using~(\ref{mincv1}), let us rewrite ${\rm CVaR}^{\alpha}_{\mathbb{P}^N}[\randxiT\pmb{x}]$ as the following linear programming problem:
	\begin{equation}
	\label{f00}
		\begin{array}{llll}
			\min &  \displaystyle t + \frac{1}{\alpha N}\sum_{i=1}^N u_i \\
				& u_i+t\geq \pmb{\xi}_i^T\pmb{x}, & i\in [N],\\
				& u_i\geq 0, & i\in [N].
		\end{array}
	\end{equation}
	The dual to~(\ref{f00}) with dual variables $w_i$, $i\in [N]$, is
	\begin{equation}
	\label{f01}
		\begin{array}{llll}
			\max &  \displaystyle \sum_{i=1}^N \pmb{\xi}^T_i\pmb{x} w_i \\
				& \displaystyle \sum_{i=1}^N w_i=1,\\
				& 0\leq w_i\leq \frac{1}{\alpha N}, & i\in [N].
		\end{array}
	\end{equation}
	The dual~(\ref{f01}) can be solved using the following greedy method. 
	If $\frac{1}{\alpha N}> 1$ (so $\alpha< \frac{1}{N}$),  then we set $w_{1}=1$ and $w_{i}=0$ for $i=2,\dots,N$. If $\frac{1}{\alpha N}\leq 1$, then we set $w_{i}=\frac{1}{N\alpha}$ for $i=1,\dots,l-1$, $w_{l}=1-\frac{l-1}{\alpha N}$, and $w_{i}=0$ for $i=l+1,\dots,N$. Hence, the optimal objective value of the dual equals~(\ref{owadef}) with the weights defined as~(\ref{owaw}). By the linear programming strong duality, the proposition follows.
\end{proof}
Observe that if $\alpha=\frac{l}{N}$ for some $l=1,\dots,N$, then ${\rm CVaR}^{\alpha}_{\mathbb{P}^N}[\randxiT\pmb{x}]$ is just the average of the $l$ largest solution costs under $\mathbb{P}^N=(\pmb{\xi}_1,\dots,\pmb{\xi}_N)$.
Using Proposition~\ref{propowa}, we now characterize the complexity of $\textsc{CVaR}~\mathcal{P}$ for some generic combinatorial optimization problem $\mathcal{P}$.
Namely,
 assume that $\mathcal{P}$ is the following \textsc{Representatives Selection} problem (\textsc{RS} for short), in which 
  we are given a set $\mathcal{T}$ of $n$ tools, numbered from $1$ to $n$. This set is partitioned into $\ell$ disjoint sets $T_1,\ldots,T_\ell$.
  We wish to choose a subset of the tools~$\mathcal{T}$  that contains exactly one tool from each set $T_i$, $i\in[\ell]$. In the deterministic case, each tool $j\in T$ has a nonnegative cost $\xi_j$, and we seek a subset of the tools (a solution) whose total cost is minimum. 
  Clearly, the set of feaslible solutions~$\mathcal{X}_s$ can be described by 
  the constraints $\sum_{j\in T_i} x_j=1$ for each $i\in[\ell]$.
  It is easily seen that the \textsc{RS} problem, in this case 
  has a trivial solution, which can be constructed by setting $x_{j_i}=1$, where $j_i$ is the index of the variable in $T_i$ with the smallest cost under $\pmb{\xi}=(\xi_1,\ldots, \xi_n)$.
    In the \textsc{Min-Max RS} problem, we are given a set $\pmb{\xi}_1,\dots,\pmb{\xi}_K\in \Rset_{+}^n$ of realizations of the cost vector and we seek a solution minimizing the largest cost, i.e. we solve the following problem:
$$\min_{\pmb{x}\in \mathcal{X}_s}\max_{i\in[K]} \pmb{\xi}_i^T\pmb{x}.$$

The \textsc{Min-Max RS} problem has been investigated in~\cite{DW13, DK12, KKZ15}. In particular, it has been shown that it is strongly NP-hard when $K$ is a part of the input. We consider now the \textsc{CVaR~RS} problem, i.e. the problem~(\ref{mincv1}) with $\mathcal{X}_s$ and a fixed $\alpha\in (0,1]$.

\begin{thm}
\label{thmcompl}
	The \textsc{CVaR~RS} problem is strongly NP-hard for each $\alpha\in (0,1)$ when $K$ is a part of the input.
\end{thm}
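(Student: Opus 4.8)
The plan is to reduce from the \textsc{Min-Max RS} problem, which is known to be strongly NP-hard when $K$ is part of the input \cite{DW13, DK12, KKZ15}. The engine of the reduction is Proposition~\ref{propowa}: for $\alpha\in(\frac{l-1}{N},\frac{l}{N}]$ the value ${\rm CVaR}^{\alpha}_{\mathbb{P}^N}[\randxiT\pmb{x}]$ is an OWA aggregation that places all of its (unit-summing) weight on the $l$ largest scenario costs, with a strictly positive weight $w_1=\frac{1}{\alpha N}$ on the single largest one. This yields the separation I want to exploit: if the $l$ largest costs of a solution are all equal to a common value $V$, then ${\rm CVaR}^{\alpha}=V$ (the weights sum to $1$); whereas if the largest cost strictly exceeds $V$ while the remaining top-$l$ costs are all $\ge V$, then ${\rm CVaR}^{\alpha}>V$, since $w_1>0$. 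Note that this dichotomy holds uniformly in both weight regimes of~(\ref{owaw}), including the degenerate case $l=1$ where ${\rm CVaR}^{\alpha}$ is simply the maximum cost.

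Given a decision instance of \textsc{Min-Max RS} --- groups $T_1,\dots,T_\ell$, scenarios $\pmb{\xi}_1,\dots,\pmb{\xi}_K$, and a bound $B$, asking whether some $\pmb{x}\in\mathcal{X}_s$ has $\max_{i\in[K]}\pmb{\xi}_i^T\pmb{x}\le B$ --- I would build a \textsc{CVaR~RS} instance on the \emph{same} ground set and groups. I keep the $K$ original (``real'') scenarios and add $D$ ``flat'' scenarios, each assigning the same cost $B$ to every tool, so that every feasible solution (which picks exactly $\ell$ tools, one per group) has cost exactly $\ell B$ under each flat scenario; multiplying the real costs by $\ell$ and using the threshold $\ell B$ keeps all data integral and of polynomial magnitude. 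With $N=K+D$ scenarios and $l=\lceil\alpha N\rceil$, I choose $N=\lceil (K+1)/(1-\alpha)\rceil$ (or larger), which forces $D=N-K\ge l$, i.e. at least $l$ flat scenarios are present. Because $\alpha\in(0,1)$ is a fixed constant, $N$ and $D$ are polynomial in $K$ and all numbers remain polynomially bounded, so strong NP-hardness will be preserved.

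It then remains to verify both directions. If $\pmb{x}$ satisfies $\max_i\pmb{\xi}_i^T\pmb{x}\le B$ (after scaling, $\le\ell B$), then all real costs are $\le\ell B$ while the $D\ge l$ flat costs equal $\ell B$; hence the $l$ largest costs are all equal to $\ell B$ and ${\rm CVaR}^{\alpha}=\ell B$. Conversely, if some real cost exceeds $\ell B$, then among the costs there are still at least $D\ge l$ values equal to $\ell B$ together with a value strictly above $\ell B$; the largest cost therefore strictly exceeds $\ell B$ and every one of the top $l$ costs is $\ge\ell B$, so by the separation above ${\rm CVaR}^{\alpha}>\ell B$. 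Consequently $\min_{\pmb{x}\in\mathcal{X}_s}{\rm CVaR}^{\alpha}\le\ell B$ if and only if the \textsc{Min-Max RS} instance is a yes-instance, which completes the reduction.

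The point I would treat most carefully --- and the only genuinely delicate step --- is the effect of the fractional top weight $w_l=1-\frac{l-1}{\alpha N}$ when $\alpha N$ is not an integer. I must confirm that ``all top-$l$ costs equal $\ell B$'' yields exactly ${\rm CVaR}^{\alpha}=\ell B$ (immediate, as the active weights sum to $1$) and that a single cost above $\ell B$ in the top block forces a strict increase (immediate, since $w_1=\frac{1}{\alpha N}>0$ and all top-$l$ costs are $\ge\ell B$). Once this is pinned down, the padding count $D$ is the sole place where $\alpha$ enters the construction, and the assumption $\alpha<1$ is exactly what makes $D\ge l$ achievable with polynomially many flat scenarios; this is also consistent with the problem being easy at $\alpha=1$, where ${\rm CVaR}^{\alpha}$ collapses to the expectation.
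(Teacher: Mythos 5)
Your proof is correct, and while it shares the paper's high-level strategy---reduce from \textsc{Min-Max RS}, invoke Proposition~\ref{propowa} to read ${\rm CVaR}^{\alpha}$ as an OWA whose positive weights sum to one with $w_1>0$, and pad the sample with artificial scenarios---the gadget is genuinely different. The paper adds an auxiliary mandatory tool and $l-1$ padding scenarios charging a huge cost $M$ to that tool only; these always occupy the top $l-1$ positions, so the $l$-th position (carrying the positive weight $1-\frac{l-1}{\alpha N}$) is exactly the min-max value, yielding an affine correspondence between optimal objective values. You instead keep the ground set unchanged and pad with $D\geq l$ ``flat'' scenarios pinned at the decision threshold $\ell B$, so the top-$l$ block is identically $\ell B$ on yes-instances, while any violating real scenario pops to the first position and, since $w_1>0$ and all remaining top-$l$ costs stay $\geq \ell B$, strictly pushes ${\rm CVaR}^{\alpha}$ above $\ell B$; this gives a clean threshold dichotomy rather than a value map. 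Your bookkeeping is sound: with $N\geq (K+1)/(1-\alpha)$ one gets $D=N-K\geq \alpha N+1>\lceil\alpha N\rceil=l$, the identity $l=\lceil\alpha N\rceil$ is exactly the condition $\alpha\in(\frac{l-1}{N},\frac{l}{N}]$ needed for Proposition~\ref{propowa}, and all quantities remain polynomial for fixed $\alpha$, so strong NP-hardness transfers. What each route buys: the paper's version produces an explicit formula for the optimal CVaR in terms of the min-max optimum (at the price of an extra group and the slightly delicate arithmetic verifying $\alpha\in(\frac{l-1}{N},\frac{l}{N}]$ for $N=K+l-1$), whereas yours is a pure decision-to-decision reduction that needs no auxiliary element and whose only $\alpha$-dependence is the padding count.
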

\begin{proof}

Given an instance $(n,\mathcal{T}, \pmb{\xi}_1,\dots,\pmb{\xi}_K)$ of \textsc{Min-Max RS} and $\alpha\in (0,1)$, we construct the corresponding instance $(n',\mathcal{T}',\mathbb{P}^N, \alpha)$ of \textsc{CVaR~RS} as follows. Let $n'=n+1$ and $\mathcal{T}'=\mathcal{T}\cup T_0$, where $T_0=\{n+1\}$. This forms a set $\mathcal{X}'_s\in \{0,1\}^{n+1}$. Observe that $x_{n+1}=1$ in any feasible solution $\pmb{x}\in \mathcal{X}'_s$.
Let $l=\lceil \frac{K \alpha}{1-\alpha} \rceil$ and fix $N=K+l-1$. Define the probability distribution
 $$\mathbb{P}^{N}=(\pmb{\xi}'_1,\dots,\pmb{\xi}'_K,\underbrace{\pmb{\zeta},\dots,\pmb{\zeta}}_{l-1 \text{ copies}}),$$
 where the costs of the variables $x_i$, $i\in [n]$, under $\pmb{\xi}'_i$ are the same as under $\pmb{\xi}_i$ and the cost of $x_{n+1}$ is~0; the costs of the variables  $x_i$, $i\in [n]$, under $\pmb{\zeta}$ are~0 and the cost of $x_{n+1}$ is equal to a large constant $M$ (we can choose, for example, $M=\sum_{i=1}^K||\pmb{\xi}_i||_{1}$). Using the equality $\lceil x \rceil = x +r$ for $r\in [0,1)$, we get
 $$\frac{l-1}{N}=\frac{\lceil \frac{K \alpha}{1-\alpha} \rceil-1}{K+\lceil \frac{K \alpha}{1-\alpha} \rceil-1}=\frac{\frac{K \alpha}{1-\alpha}+r-1}{K+ \frac{K \alpha}{1-\alpha}+r-1}=\frac{K\alpha+(1-\alpha)(r-1)}{K+(1-\alpha)(r-1)}< \alpha,$$
 where the inequality follows from the fact that $\alpha, r\in [0,1)$ and thus $(1-\alpha)(r-1)\in [-1,0)$.  We also obtain
 $$\frac{l}{N}=\frac{\lceil \frac{K \alpha}{1-\alpha} \rceil}{K+\lceil \frac{K \alpha}{1-\alpha} \rceil-1}\geq \frac{ \frac{K \alpha}{1-\alpha}}{K+\frac{K \alpha}{1-\alpha}}= \alpha$$
 Hence  $\alpha\in (\frac{l-1}{N}, \frac{l}{N}]$. Using Proposition~\ref{propowa}, we get for any solution $\pmb{x}\in \mathcal{X}'_s$:
$${\rm CVaR}^{\alpha}_{\mathbb{P}^{N}}[\randxiT\pmb{x}]=
\underbrace{\frac{1}{\alpha N}M +\dots+\frac{1}{\alpha N}M}_{l-1 \text{ times }}+(1-\frac{l-1}{\alpha N})\pmb{\xi}_{\sigma(1)}^T\pmb{x},$$
where $\pmb{\xi}_{\sigma(1)}^T\pmb{x}=\max_{i\in [K]}\pmb{\xi}_i^T\pmb{x}$. Note that $(1-\frac{l-1}{\alpha N})>0$, since there are $l$ positive weights in~(\ref{owaw}) (see Proposition~\ref{propowa}).
Thus, there is a solution to \textsc{Min-Max RS} with the maximum cost at most $c$ if and only if there is a solution to \textsc{CVaR RS} with the Conditional Value at Risk at most $\frac{(l-1)M}{\alpha N}+(1-\frac{l-1}{\alpha N})c$.
\end{proof}

Since for $\epsilon=0$,  $\textsc{Distr}~\mathcal{P}$ is equivalent to $\textsc{CVaR}~\mathcal{P}$, we get the following corollary:
\begin{cor}
	The \textsc{Distr RS} problem is strongly NP-hard for each $\alpha\in (0,1)$.
\end{cor}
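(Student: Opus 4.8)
The plan is to obtain the corollary directly from Theorem~\ref{thmcompl} by exhibiting \textsc{CVaR~RS} as the special case $\epsilon=0$ of \textsc{Distr~RS}. As already noted at the beginning of this section, for $\epsilon=0$ the problem $\textsc{Distr}~\mathcal{P}$ coincides with $\textsc{CVaR}~\mathcal{P}$, so the hardness of the latter should transfer verbatim. Concretely, I would take an arbitrary instance $(\mathbb{P}^N,\alpha)$ of \textsc{CVaR~RS} --- which by Theorem~\ref{thmcompl} is strongly NP-hard for every fixed $\alpha\in(0,1)$ --- and map it to the \textsc{Distr~RS} instance in which the empirical distribution is $\widehat{\mathbb{P}}^N:=\mathbb{P}^N$ and the radius is $\epsilon=0$. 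Note that the repeated realizations appearing in the construction of Theorem~\ref{thmcompl} are legitimate here, since the paper explicitly allows repeated points in a sample.

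First I would verify that the inner supremum in the formulation~\eqref{mincvd00} degenerates when $\epsilon=0$. Since $d_W$ is a genuine metric on $\mathcal{M}(\Xi)$, the condition $d_W(\widehat{\mathbb{P}}^N,\mathbb{Q})\le 0$ forces $\mathbb{Q}=\widehat{\mathbb{P}}^N$, so the Wasserstein ball $\mathbb{B}_0(\widehat{\mathbb{P}}^N)$ is the singleton $\{\widehat{\mathbb{P}}^N\}$. Consequently $\sup_{\mathbb{P}\in\mathbb{B}_0(\widehat{\mathbb{P}}^N)} {\rm CVaR}^{\alpha}_{\mathbb{P}}[\randxiT\pmb{x}]={\rm CVaR}^{\alpha}_{\widehat{\mathbb{P}}^N}[\randxiT\pmb{x}]$, while the feasible region of~\eqref{pf} is the same set $\mathcal{X}_s$ as that of~\eqref{mincv1}. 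Hence on the constructed instance \textsc{Distr~RS} and \textsc{CVaR~RS} have identical objective values at every feasible $\pmb{x}$, and in particular identical optima. Equivalently, the same collapse can be read off the set $\mathcal{B}_{\epsilon}(\widehat{\mathbb{P}}^N)$ of Lemma~\ref{lemworst}: for $\epsilon=0$ the constraint $\sum_{i=1}^N\|\pmb{\xi}_i-\hat{\pmb{\xi}}_i\|_q\le 0$ together with nonnegativity of the norms forces $\pmb{\xi}_i=\hat{\pmb{\xi}}_i$ for all $i$, so $\mathcal{B}_0(\widehat{\mathbb{P}}^N)=\{\widehat{\mathbb{P}}^N\}$.

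The reduction itself is trivial: it copies the sample unchanged and appends the parameter $\epsilon=0$, so it runs in polynomial time and increases no numeric magnitude in the instance. This last point is the only thing that genuinely needs checking, and it is what makes the hardness \emph{strong} rather than merely ordinary: the costs $\pmb{\xi}'_i$, the vector $\pmb{\zeta}$, and the constant $M$ built in the proof of Theorem~\ref{thmcompl} are preserved verbatim and are already polynomially bounded, so strong NP-hardness is inherited by \textsc{Distr~RS}. I do not foresee a real obstacle here; the entire content is the observation that $\epsilon=0$ is an admissible input that collapses the ambiguity set to a single distribution, reducing $\textsc{Distr}~\mathcal{P}$ to $\textsc{CVaR}~\mathcal{P}$.
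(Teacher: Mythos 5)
Your proposal is correct and matches the paper's argument, which likewise derives the corollary from Theorem~\ref{thmcompl} via the observation that for $\epsilon=0$ the problem $\textsc{Distr}~\mathcal{P}$ coincides with $\textsc{CVaR}~\mathcal{P}$. You merely spell out the details (the collapse of the Wasserstein ball to a singleton and the preservation of numeric magnitudes for strong NP-hardness) that the paper leaves implicit.
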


The \textsc{RS} problem can be seen as a very special case of other basic combinatorial optimization problems. In particular, it is a special case of the \textsc{Shortest Path} problem. To see this, consider the
 series-parallel
 graph $G$ shown in Figure~\ref{fig1}.
 \begin{figure}[ht]
\centering
\includegraphics[height=2.5cm]{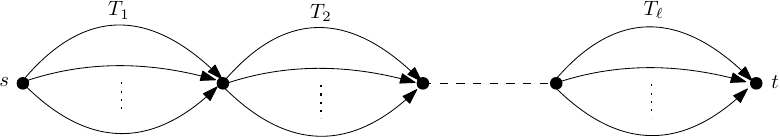}
\caption{An instance of the \text{Shortest Path} problem, corresponding to \textsc{RS}.}\label{fig1}
\end{figure}
The arcs belonging to the subsequent layers of $G$ correspond to $T_1,\dots, T_{\ell}$. The costs of the arcs are equal to the costs of the tools. It is easy to see that each $s-t$ path corresponds to some solution of the \textsc{RS} problem and vice versa. 
Since every $s-t$ path in $G$ is also a spanning tree of $G$, the same transformation applies to the \textsc{Minimum Spanning Tree} problem. This leads to the following corollary:
\begin{cor}
	The \textsc{Distr Shortest Path}  and \textsc{Distr Spanning Tree} problems are strongly NP-hard 
	in  series-parallel graphs
	for each $\alpha\in (0,1)$.
\end{cor}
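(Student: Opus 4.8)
The plan is to reduce \textsc{Distr RS} to each of the two problems using precisely the cost-preserving correspondence between RS solutions and $s$--$t$ paths (respectively spanning trees) in the series-parallel graph $G$ of Figure~\ref{fig1}, and to observe that this correspondence carries over the entire distributionally robust objective, not merely a single cost vector.

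First I would fix an arbitrary instance of \textsc{Distr RS}, consisting of the tool set $\mathcal{T}$ partitioned into $T_1,\dots,T_\ell$, the empirical distribution $\widehat{\mathbb{P}}^N=(\hat{\pmb{\xi}}_1,\dots,\hat{\pmb{\xi}}_N)$ on the support $\Xi\subseteq\Rset^n_+$, a radius $\epsilon\geq 0$, and a risk level $\alpha\in(0,1)$, and build $G$ exactly as described before the corollary. Its $n$ arcs are identified one-to-one with the $n$ tools, and its vertices form a chain $s=v_0,v_1,\dots,v_\ell=t$ in which the parallel arcs joining $v_{i-1}$ to $v_i$ are precisely those of $T_i$. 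The construction is clearly polynomial and introduces no change to the numerical data, so strong NP-hardness (rather than ordinary NP-hardness) will be transferred.

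The key step is to note that the set of feasible solutions coincides as a subset of $\{0,1\}^n$ across all three problems. An $s$--$t$ path in $G$ must use exactly one arc in each layer, so its incidence vector satisfies $\sum_{j\in T_i}x_j=1$ for every $i\in[\ell]$, and the converse holds as well; hence the integer points of the path polytope equal $\mathcal{X}_s$. Because $G$ has $\ell+1$ vertices, every spanning tree has exactly $\ell$ edges, and connectivity forces at least one arc in each layer, so a spanning tree also selects exactly one arc per layer; thus the spanning trees of $G$ are in bijection with $\mathcal{X}_s$ too. Crucially, the arc costs equal the tool costs under every realization, so for each incidence vector $\pmb{x}$ the quantity $\pmb{\xi}^T\pmb{x}$ is identical in all three problems for every $\pmb{\xi}$. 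Consequently the sample points, the support $\Xi$, the set $\mathcal{B}_\epsilon(\widehat{\mathbb{P}}^N)$, and the inner objective ${\rm CVaR}^\alpha_{\mathbb{P}^N}[\randxiT\pmb{x}]$ are the same, so the outer objective $\max_{\mathbb{P}^N\in\mathcal{B}_\epsilon(\widehat{\mathbb{P}}^N)}{\rm CVaR}^\alpha_{\mathbb{P}^N}[\randxiT\pmb{x}]$ agrees pointwise on the common feasible set.

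Since the \textsc{Distr RS} instance and the constructed \textsc{Distr Shortest Path} (resp.\ \textsc{Distr Spanning Tree}) instance share both the feasible set and the objective, their optimal values coincide, and solving either graph problem on $G$ solves \textsc{Distr RS}. By the preceding corollary, \textsc{Distr RS} is strongly NP-hard for every $\alpha\in(0,1)$, so both problems are strongly NP-hard even on series-parallel graphs. I do not expect a genuine obstacle here; the only points demanding care are verifying that \emph{every} spanning tree of $G$ (not just every $s$--$t$ path) uses exactly one arc per layer, which follows from the edge-count and connectivity argument above, and confirming that the reduction preserves the cost magnitudes so that strong, rather than ordinary, NP-hardness is inherited.
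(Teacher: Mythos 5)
Your proposal is correct and follows essentially the same route as the paper, which proves the corollary simply by observing that \textsc{RS} embeds into \textsc{Shortest Path} and \textsc{Minimum Spanning Tree} on the layered series-parallel graph of Figure~\ref{fig1} and that the cost correspondence carries the whole distributionally robust objective over. Your explicit edge-count-plus-connectivity check that \emph{every} spanning tree of $G$ uses exactly one arc per layer is a welcome detail that the paper only implies with the remark that every $s$--$t$ path is a spanning tree.
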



The $\textsc{CVaR}~\mathcal{P}$ problem for $\alpha=1$ reduces to minimizing the expected solution cost. In this case, the problem with $\mathbb{P}^N=(\pmb{\xi}_1,\dots,\pmb{\xi}_N)$ easily reduces to solving the deterministic problem~$\mathcal{P}$ with the aggregated cost vector $\hat{\pmb{\xi}}=\frac{1}{N}\sum_{i=1}^N\pmb{\xi}_i$. Hence, the complexity of $\textsc{CVaR}~\mathcal{P}$ for $\alpha=1$ is then the same as the complexity of~$\mathcal{P}$. It has been shown in~\cite{KZ15} that solving~$\mathcal{P}$ for the aggregated cost vector $\hat{\pmb{\xi}}$ yields a $w_1N$-approximate solution to the problem of minimizing OWA with nonincreasing weights, i.e. $w_1\geq w_2\geq\dots\geq w_N$. 
Using Proposition~\ref{propowa} we conclude that $w_1=\min\{1,\frac{1}{\alpha N}\}$, so the approximation ratio becomes $\min\{N, \frac{1}{\alpha}\}$. We have thus established the following result:
\begin{prop}
If $\mathcal{P}$ can be solved in polynomial time, then $\textsc{CVaR}~\mathcal{P}$ with $\alpha\in (0,1)$  is approximable within $\min\{N,\frac{1}{\alpha}\}$.
\end{prop}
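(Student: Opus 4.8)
The plan is to reformulate $\textsc{CVaR}~\mathcal{P}$ as the minimization of an OWA objective and to invoke the approximation result of~\cite{KZ15} for OWA problems with nonincreasing weights. By Proposition~\ref{propowa}, for every $\pmb{x}\in\mathcal{X}$ we have ${\rm CVaR}^{\alpha}_{\mathbb{P}^N}[\randxiT\pmb{x}]={\rm Owa}^{\pmb{w}}_{\mathbb{P}^N}[\randxiT\pmb{x}]$, where the weight vector $\pmb{w}$ of~(\ref{owaw}) depends on $\alpha$ and $N$ but not on $\pmb{x}$. Hence minimizing CVaR over $\mathcal{X}$ is exactly the same problem as minimizing this fixed OWA objective, and any approximate solution to one is an equally good approximate solution to the other.

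The first substantive step is to check that $\pmb{w}$ in~(\ref{owaw}) is nonincreasing, since this is the precise hypothesis under which the bound of~\cite{KZ15} applies. For $\alpha<\frac1N$ this is immediate, as $\pmb{w}=(1,0,\dots,0)$. For $\alpha\geq\frac1N$ all but the last positive weight equal $\frac{1}{\alpha N}$, so the only thing to verify is that the last positive weight $1-\frac{l-1}{\alpha N}$ does not exceed $\frac{1}{\alpha N}$; this follows from the standing assumption $\alpha\in(\frac{l-1}{N},\frac{l}{N}]$, which gives $\alpha N\leq l$ and therefore $1-\frac{l-1}{\alpha N}=\frac{1}{\alpha N}+\bigl(1-\tfrac{l}{\alpha N}\bigr)\leq\frac{1}{\alpha N}$.

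Next I would apply the result from~\cite{KZ15}: for an OWA objective with nonincreasing weights, solving the deterministic problem~$\mathcal{P}$ with the aggregated cost vector $\hat{\pmb{\xi}}=\frac1N\sum_{i=1}^N\pmb{\xi}_i$ returns a feasible solution whose OWA value is at most $w_1N$ times the optimal OWA value. Because $\mathcal{P}$ is polynomially solvable by assumption, this aggregated instance is solved in polynomial time, so the whole procedure is polynomial and produces a feasible solution for $\textsc{CVaR}~\mathcal{P}$.

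It then remains to evaluate $w_1N$. Reading off~(\ref{owaw}), $w_1=1$ when $\alpha<\frac1N$ (equivalently $\frac{1}{\alpha N}>1$) and $w_1=\frac{1}{\alpha N}$ when $\alpha\geq\frac1N$, so $w_1=\min\{1,\frac{1}{\alpha N}\}$ and hence $w_1N=\min\{N,\frac1\alpha\}$. Combined with the equivalence from Proposition~\ref{propowa}, this yields a solution of $\textsc{CVaR}~\mathcal{P}$ within the factor $\min\{N,\frac1\alpha\}$, as claimed. The main obstacle is the monotonicity verification of~(\ref{owaw}); once that is settled, the remaining steps are a direct substitution into the cited bound.
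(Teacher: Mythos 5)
Your proposal is correct and follows essentially the same route as the paper: express CVaR as an OWA objective via Proposition~\ref{propowa}, invoke the $w_1N$-approximation bound of~\cite{KZ15} for the aggregated cost vector $\hat{\pmb{\xi}}$, and substitute $w_1=\min\{1,\frac{1}{\alpha N}\}$. Your explicit verification that the weights in~(\ref{owaw}) are nonincreasing (using $\alpha N\leq l$) is a detail the paper leaves implicit, but it does not change the argument.
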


\section{Solving the Wasserstein distributionally robust problem}
\label{sec4}

In this section, we will propose some exact and approximate methods of solving the $\textsc{Distr}~\mathcal{P}$ problem. We will consider the problem with various   $q$-norms,
$q\in \overline{\Rset}_{\geq 1}$, and support sets~$\Xi$. We start with the case when $\Xi=\Rset^n_+$. Then, we will consider a support $\Xi$ being a hyperrectangle in $\Rset^n_+$ (such a model has been considered in~\cite{WYSZ19}). Finally, we will show some results for any convex, closed, and bounded support $\Xi$.

\subsection{The support set  $\Xi=\Rset^n_{+}$}
\label{sec4_1}

The following theorem is a generalization of the result obtained in~\cite{WYSZ19}:

\begin{thm}
\label{propunr}
	If $\Xi=\Rset^n_{+}$, then $\textsc{Distr}~\mathcal{P}$ can be expressed as
	\begin{equation}
\label{mincvd2}
\begin{array}{lll}
	\min & {\rm CVaR}^{\alpha}_{\widehat{\mathbb{P}}^N}[\randxiT\pmb{x}]+\gamma \epsilon ||\pmb{x}||_{q'}\\
		& \pmb{A}\pmb{x}\geq \pmb{b}, \\
		& \pmb{x}\in \{0,1\}^n,
	\end{array}
\end{equation}
where $\gamma=N$ if $\alpha<\frac{1}{N}$, $\gamma=\frac{1}{\alpha}$ if $\alpha\geq \frac{1}{N}$, and $||\cdot||_{q'}$ is the dual norm to $||\cdot||_{q}$, i.e. $\frac{1}{q}+\frac{1}{q'}=1$.
\end{thm}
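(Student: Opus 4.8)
The plan is to work with the reformulation~(\ref{pf}), whose validity is guaranteed by Lemma~\ref{lemworst}, fix a single feasible $\pmb{x}$, and evaluate the inner maximum in closed form. Since the feasibility constraints $\pmb{A}\pmb{x}\geq\pmb{b}$, $\pmb{x}\in\{0,1\}^n$ are identical in~(\ref{pf}) and~(\ref{mincvd2}), it suffices to establish, for every such $\pmb{x}$, the pointwise identity
$$\max_{\mathbb{P}^N\in\mathcal{B}_{\epsilon}(\widehat{\mathbb{P}}^N)} {\rm CVaR}^{\alpha}_{\mathbb{P}^N}[\randxiT\pmb{x}] = {\rm CVaR}^{\alpha}_{\widehat{\mathbb{P}}^N}[\randxiT\pmb{x}]+\gamma\epsilon\,\|\pmb{x}\|_{q'},$$
after which minimizing both sides over the common feasible region yields~(\ref{mincvd2}). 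The maximum is attained because $\mathcal{B}_{\epsilon}(\widehat{\mathbb{P}}^N)$ is compact and ${\rm CVaR}$ is continuous.

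First I would reduce the vector perturbations to scalar budgets. Writing $\pmb{\xi}_i=\hat{\pmb{\xi}}_i+\pmb{\delta}_i$, the defining constraint of $\mathcal{B}_{\epsilon}(\widehat{\mathbb{P}}^N)$ becomes $\sum_{i=1}^N\|\pmb{\delta}_i\|_q\leq N\epsilon$ with $\hat{\pmb{\xi}}_i+\pmb{\delta}_i\geq\pmb{0}$. The ${\rm CVaR}$ depends on $\mathbb{P}^N$ only through the scalar costs $\pmb{\xi}_i^T\pmb{x}=\hat{\pmb{\xi}}_i^T\pmb{x}+\pmb{\delta}_i^T\pmb{x}$, and by Proposition~\ref{propowa} it equals an OWA with nonnegative weights, hence is nondecreasing in each scenario cost. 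Therefore, for a fixed per-scenario budget $r_i:=\|\pmb{\delta}_i\|_q$ the optimal choice maximizes $\pmb{\delta}_i^T\pmb{x}$; by H\"older's inequality $\pmb{\delta}_i^T\pmb{x}\leq r_i\|\pmb{x}\|_{q'}$, and because $\pmb{x}\geq\pmb{0}$ the maximizing $\pmb{\delta}_i$ can be chosen coordinatewise nonnegative for every $q\in\overline{\Rset}_{\geq 1}$ (including $q=1,\infty$), so that $\hat{\pmb{\xi}}_i+\pmb{\delta}_i\in\Xi=\Rset^n_+$ and the bound is attained. Thus the inner maximum reduces to maximizing the ${\rm CVaR}$ of the costs $\hat{\pmb{\xi}}_i^T\pmb{x}+r_i\|\pmb{x}\|_{q'}$ over $r_i\geq 0$, $\sum_i r_i\leq N\epsilon$.

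Next I would run an OWA budget-allocation argument. Put $c=\|\pmb{x}\|_{q'}$, let $\pmb{a}=(\hat{\pmb{\xi}}_1^T\pmb{x},\dots,\hat{\pmb{\xi}}_N^T\pmb{x})$, and let $\pmb{w}$ be the nonincreasing weight vector of Proposition~\ref{propowa}, so that ${\rm Owa}^{\pmb{w}}(\pmb{v})=\max_{\pi}\sum_i w_i v_{\pi(i)}$ by the rearrangement inequality. Choosing the permutation $\pi^*$ optimal for $\pmb{a}+c\pmb{r}$ gives
$${\rm Owa}^{\pmb{w}}(\pmb{a}+c\pmb{r})=\sum_i w_i a_{\pi^*(i)}+c\sum_i w_i r_{\pi^*(i)}\leq {\rm Owa}^{\pmb{w}}(\pmb{a})+c\,w_1\sum_i r_i\leq {\rm Owa}^{\pmb{w}}(\pmb{a})+w_1cN\epsilon,$$
using $0\leq w_i\leq w_1$ and $r_i\geq 0$. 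Equality is achieved by dumping the entire budget $N\epsilon$ on the scenario attaining $\max_i\hat{\pmb{\xi}}_i^T\pmb{x}$: that scenario remains maximal after the bump, so its weight stays $w_1$ and the OWA rises by exactly $w_1cN\epsilon$. Since~(\ref{owaw}) gives $w_1=\min\{1,\tfrac{1}{\alpha N}\}=\gamma/N$, we get $w_1cN\epsilon=\gamma\epsilon\|\pmb{x}\|_{q'}$, while ${\rm Owa}^{\pmb{w}}(\pmb{a})={\rm CVaR}^{\alpha}_{\widehat{\mathbb{P}}^N}[\randxiT\pmb{x}]$, which is the claimed identity.

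The main obstacle is the first reduction: proving that the H\"older bound is tight with a \emph{nonnegative} perturbation, uniformly in $q$. This is exactly where the unrestricted support $\Xi=\Rset^n_+$ (no binding upper bounds on costs) together with $\pmb{x}\geq\pmb{0}$ is essential, and it explains why the bounded-support cases treated later cannot be handled by the same formula. The OWA step, by contrast, is routine once Proposition~\ref{propowa} and the max-over-permutations representation are available; one only checks that bumping the current maximizer keeps it maximal, which is immediate as its cost only increases.
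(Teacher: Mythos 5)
Your proposal is correct, and while it starts from the same place as the paper (the reformulation~(\ref{pf}) via Lemma~\ref{lemworst}, a pointwise evaluation of the inner maximum for fixed $\pmb{x}$, and the OWA representation of Proposition~\ref{propowa}), it resolves the inner maximization by a genuinely different and more elementary route. The paper first argues that a worst distribution can be realigned so that the perturbations follow the sorted order of the empirical costs (the $\sigma$ versus $\sigma'$ step), and then solves the resulting continuous allocation problem by Lagrangian duality, invoking the dual-norm identity inside the Lagrangian and Slater's condition to conclude that the optimal value is $\max_{i\in[l]} Nw_i\epsilon\|\pmb{x}\|_{q'}=Nw_1\epsilon\|\pmb{x}\|_{q'}$. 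You instead use the dual-norm identity up front to collapse the vector perturbations to scalar budgets $r_i$, and then settle the budget-allocation problem by a rearrangement bound ($w_i\le w_1$) matched by the explicit achiever that dumps the whole budget $N\epsilon$ on the scenario attaining $\max_i\hat{\pmb{\xi}}_i^T\pmb{x}$; this sidesteps both the realignment step and the duality machinery, and the max-over-permutations view of OWA makes the monotonicity and tightness arguments transparent. A further point in your favor: you explicitly verify that the H\"older-tight perturbation can be taken coordinatewise nonnegative so that $\hat{\pmb{\xi}}_i+\pmb{\delta}_i$ stays in $\Xi=\Rset^n_+$, whereas the paper's description~(\ref{defbu}) of $\mathcal{B}_{\epsilon}(\widehat{\mathbb{P}}^N)$ silently drops the support constraint $\pmb{\Delta}_i\ge -\hat{\pmb{\xi}}_i$ and leaves this attainment issue implicit. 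What the paper's duality route buys in exchange is a template that transfers to settings where the allocation problem is not solvable by inspection; for the present statement your argument is shorter and fully rigorous.
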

\begin{proof}
See Appendix~\ref{dod}.
\end{proof}

If $q=1$, then~(\ref{mincvd2}) reduces to minimizing the Conditional Value at Risk for the empirical distribution $\widehat{\mathbb{P}}^N$. Indeed, the dual norm $||\pmb{x}||_{\infty}$ is either~0 (when $\pmb{x}=\pmb{0}$) or~1 (when $\pmb{x}\neq \pmb{0})$ and the second component in the objective function of~(\ref{mincvd2}) is constant. If $q=\infty$, then $||\pmb{x}||_{1}=\sum_{i=1}^n x_i$ and~(\ref{mincvd2}) becomes  a mixed integer program. Consider now the case of arbitrary 
$q\in \overline{\Rset}_{\geq 1}$. Since $||\pmb{x}||_{q'}=(\sum_{i=1}^n x_i)^{\frac{q}{q-1}}$, the model (\ref{mincvd2}) can be converted to
\begin{equation}
\label{mod00}
	\begin{array}{lll}
	\min & \displaystyle t+\frac{1}{\alpha N}\sum_{i=1}^N u_i + \lambda^{\frac{q-1}{q}}\gamma\epsilon\\
		& \hat{\pmb{\xi}}_i^T\pmb{x}-t\leq u_i, & i\in [N]\\
		& \pmb{A}\pmb{x}\geq \pmb{b}, \\
		& \displaystyle \sum_{i=1}^n x_i\leq \lambda,\\
		& \pmb{x}\in \{0,1\}^n,\\
		& u_i\geq 0, & i\in [N]\\
		&\lambda\geq 0.
	\end{array}
\end{equation}
In the formulation~(\ref{mod00}) we use~(\ref{mincv1}) to represent ${\rm CVaR}^{\alpha}_{\widehat{\mathbb{P}}^N}[\randxiT\pmb{x}]$. In any optimal solution to~(\ref{mod00}) we can fix $\lambda=\sum_{i=1}^n x_i$ and thus $\lambda=||\pmb{x}||_{q'}$. Hence the optimal value of $\lambda$ is integer and takes one of the values in $\Lambda=\{n_{\min},\dots,n_{\max}\}$, where $n_{\min}$ and $n_{\max}$ are the minimum and the maximum cardinality of solution $\pmb{x}\in \mathcal{X}$, respectively. Thus, we need to solve~(\ref{mod00}) for each fixed $\lambda\in\Lambda$ and choose a solution with the minimum value of the objective function.  Hence, ~(\ref{mod00}) can be solved by solving a family of mixed integer programs.
If $\alpha=1$, then model~(\ref{mod01}) reduces to (see also~\cite{KWFH22})
\begin{equation}
\label{mod01}
	\begin{array}{lll}
	\min & \displaystyle {\rm E}_{\widehat{\mathbb{P}}^N}[\randxiT\pmb{x}]+  \epsilon ||\pmb{x}||_{q'}=\hat{\pmb{\xi}}^T\pmb{x}+ \epsilon ||\pmb{x}||_{q'}\\
		& \pmb{A}\pmb{x}\geq \pmb{b}, \\
		& \pmb{x}\in \{0,1\}^n,
	\end{array}
\end{equation}
where $\hat{\pmb{\xi}}=\frac{1}{N}\sum_{i=1}^N \hat{\pmb{\xi}}_i$,
which can be represented equivalently as
 \begin{equation}
\label{mod03}
	\begin{array}{lll}
	\min & \displaystyle \hat{\pmb{\xi}}^T\pmb{x}+ \lambda^{\frac{q-1}{q}} \epsilon\\
		& \pmb{A}\pmb{x}\geq \pmb{b}, \\
		& \displaystyle \sum_{i=1}^n x_i\leq \lambda,\\
		& \pmb{x}\in \{0,1\}^n,\\
		&\lambda\geq 0.
	\end{array}
\end{equation}
Sometimes, ~(\ref{mod01}) can be solved in polynomial time. Indeed, when $\mathcal{P}$ is polynomially solvable and $q=1$ or $q=\infty$. In the former case the term $||\pmb{x}||_{q'}$ is constant, while in the latter case $||\pmb{x}||_{q'}=\sum_{i=1}^n x_i$, which leads to a linear objective function in~(\ref{mod01}). For arbitrary $q$, we can solve the formulation~(\ref{mod03}) for each $\lambda\in\Lambda$, which can also be done in polynomial time for some problems $\mathcal{P}$.
 For example, when $\mathcal{P}$ is the \textsc{Shortest Path}, then~(\ref{mod03}) can be solved by solving a family of the \textsc{Constrained Shortest Path} problems with all arc weights equal to~1 in the budget constraint $\sum_{i=1}^n x_i\leq \lambda$. Such a special case of this problem can be solved in polynomial time~\cite{H92}. In~\cite{KWFH22} and alternative method of solving~(\ref{mod01})  has been proposed. This method consists of solving a family of $n+1$ deterministic problems $\mathcal{P}$ with some suitably chosen cost vectors. However, our alternative approach is easier and also leads to efficient solution methods for some particular problems.

\begin{prop}
	If $(\ref{mod01})$ can be solved in polynomial time, then~(\ref{mincvd2}) is approximable within $\gamma=\min\{N,\frac{1}{\alpha}\}$
\end{prop}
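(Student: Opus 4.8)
The plan is to show that an optimal solution $\bar{\pmb{x}}$ of the expectation problem~(\ref{mod01}) is itself a $\gamma$-approximate solution of~(\ref{mincvd2}), where $\gamma=\min\{N,\frac{1}{\alpha}\}$. Since~(\ref{mod01}) is assumed to be solvable in polynomial time, such an $\bar{\pmb{x}}$ is available, and the whole argument reduces to comparing the two objective values. For brevity, write $f(\pmb{x})={\rm CVaR}^{\alpha}_{\widehat{\mathbb{P}}^N}[\randxiT\pmb{x}]+\gamma\epsilon\|\pmb{x}\|_{q'}$ for the objective of~(\ref{mincvd2}) and $g(\pmb{x})={\rm E}_{\widehat{\mathbb{P}}^N}[\randxiT\pmb{x}]+\epsilon\|\pmb{x}\|_{q'}=\hat{\pmb{\xi}}^T\pmb{x}+\epsilon\|\pmb{x}\|_{q'}$ for the objective of~(\ref{mod01}).

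The key step is a pair of pointwise inequalities between the Conditional Value at Risk and the expectation, valid for every feasible $\pmb{x}$ because the sample costs $\hat{\pmb{\xi}}_i^T\pmb{x}$ are nonnegative. The lower bound ${\rm E}_{\widehat{\mathbb{P}}^N}[\randxiT\pmb{x}]\le {\rm CVaR}^{\alpha}_{\widehat{\mathbb{P}}^N}[\randxiT\pmb{x}]$ is the standard fact that CVaR dominates the mean for $\alpha\in(0,1]$. For the upper bound I would invoke Proposition~\ref{propowa}, which expresses the CVaR as an OWA with nonincreasing weights whose largest entry is $w_1=\min\{1,\frac{1}{\alpha N}\}$; that the weights are indeed nonincreasing (so $w_i\le w_1$ for all $i$) holds in both branches of~(\ref{owaw}). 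Bounding every weight by $w_1$ and using nonnegativity of the sorted costs gives
\[
{\rm CVaR}^{\alpha}_{\widehat{\mathbb{P}}^N}[\randxiT\pmb{x}]=\sum_{i=1}^N w_i\,\hat{\pmb{\xi}}_{\sigma(i)}^T\pmb{x}\le w_1\sum_{i=1}^N \hat{\pmb{\xi}}_i^T\pmb{x}=w_1 N\cdot {\rm E}_{\widehat{\mathbb{P}}^N}[\randxiT\pmb{x}]=\gamma\,{\rm E}_{\widehat{\mathbb{P}}^N}[\randxiT\pmb{x}],
\]
since $w_1 N=\min\{N,\frac{1}{\alpha}\}=\gamma$. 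This is exactly the pointwise form of the aggregated-cost approximation bound of~\cite{KZ15} recalled just before the proposition.

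With these two inequalities the chain is routine. Applying the upper bound to $\bar{\pmb{x}}$ and factoring out $\gamma$ from the matching penalty term, I obtain $f(\bar{\pmb{x}})\le \gamma\,{\rm E}_{\widehat{\mathbb{P}}^N}[\randxiT\bar{\pmb{x}}]+\gamma\epsilon\|\bar{\pmb{x}}\|_{q'}=\gamma\,g(\bar{\pmb{x}})$. Optimality of $\bar{\pmb{x}}$ for~(\ref{mod01}) then gives $g(\bar{\pmb{x}})\le g(\pmb{x}^*)$, where $\pmb{x}^*$ is any optimal solution of~(\ref{mincvd2}). Finally, the lower bound together with $\gamma\ge 1$ (which holds since $\alpha<1$ forces $\frac{1}{\alpha}>1$ and $N\ge 1$) yields $g(\pmb{x}^*)={\rm E}_{\widehat{\mathbb{P}}^N}[\randxiT\pmb{x}^*]+\epsilon\|\pmb{x}^*\|_{q'}\le {\rm CVaR}^{\alpha}_{\widehat{\mathbb{P}}^N}[\randxiT\pmb{x}^*]+\gamma\epsilon\|\pmb{x}^*\|_{q'}=f(\pmb{x}^*)$. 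Concatenating, $f(\bar{\pmb{x}})\le\gamma\,g(\bar{\pmb{x}})\le\gamma\,g(\pmb{x}^*)\le\gamma\,f(\pmb{x}^*)$, which is the claimed guarantee.

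I do not expect a genuine obstacle here; the only point needing care is bookkeeping with the factor $\gamma$, which already sits in front of the penalty term of~(\ref{mincvd2}) but not of~(\ref{mod01}). The two $\epsilon\|\pmb{x}\|_{q'}$ contributions must be aligned correctly—on $\bar{\pmb{x}}$ by pulling $\gamma$ out of the penalty, and on $\pmb{x}^*$ by using $\gamma\ge 1$ to absorb the missing factor. Everything else is a direct consequence of Proposition~\ref{propowa} and the elementary CVaR/mean comparison.
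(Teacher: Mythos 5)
Your proposal is correct and follows essentially the same route as the paper: the sandwich inequality ${\rm E}_{\widehat{\mathbb{P}}^N}[\randxiT\pmb{x}]\le {\rm CVaR}^{\alpha}_{\widehat{\mathbb{P}}^N}[\randxiT\pmb{x}]\le \gamma\,{\rm E}_{\widehat{\mathbb{P}}^N}[\randxiT\pmb{x}]$ obtained via Proposition~\ref{propowa}, followed by the three-step comparison between the optimizer of~(\ref{mod01}) and that of~(\ref{mincvd2}). The only cosmetic difference is that you bound all OWA weights by $w_1$ in one stroke, whereas the paper splits into the cases $\alpha<\frac{1}{N}$ and $\alpha\ge\frac{1}{N}$; the resulting inequality is identical.
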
	
\begin{proof}
We will show first that for each $\alpha\in (0,1]$
\begin{equation}
\label{ecv}
{\rm E}_{\widehat{\mathbb{P}}^N}[\randxiT\pmb{x}]\leq {\rm CVaR}^{\alpha}_{\widehat{\mathbb{P}}^N}[\randxiT\pmb{x}]\leq \gamma {\rm E}_{\widehat{\mathbb{P}}^N}[\randxiT\pmb{x}].
\end{equation}
 The first inequality in~(\ref{ecv}) easily follows from the definition of the Conditional Value at Risk. To prove the second inequality, we will use Proposition~\ref{propowa}. If $\alpha<\frac{1}{N}$ (hence $N<\frac{1}{\alpha}$), then
 $${\rm CVaR}^{\alpha}_{\widehat{\mathbb{P}}^N}[\randxiT\pmb{x}]=\hat{\pmb{\xi}}_{\sigma(1)}^T\pmb{x}\leq \sum_{i=1}^N \hat{\pmb{\xi}}_i^T\pmb{x}= N\cdot  {\rm E}_{\widehat{\mathbb{P}}^N}[\randxiT\pmb{x}].$$
 If $\alpha\geq \frac{1}{N}$ (hence $N\geq \frac{1}{\alpha}$), then
 $${\rm CVaR}^{\alpha}_{\widehat{\mathbb{P}}^N}[\randxiT\pmb{x}]\leq \frac{1}{\alpha N}\sum_{i=1}^N \hat{\pmb{\xi}}_i^T\pmb{x}=\frac{1}{\alpha}  {\rm E}_{\widehat{\mathbb{P}}^N}[\randxiT\pmb{x}],$$
 which yields~(\ref{ecv}).
Let $\pmb{x}'$ be an optimal solution to~(\ref{mod01}) and let $\pmb{x}^*$ be an optimal solution to~(\ref{mincvd2}). Using~(\ref{ecv}) and the fact that $\gamma\geq 1$ we obtain

$$\frac{1}{\gamma}({\rm CVaR}^{\alpha}_{\widehat{\mathbb{P}}^N}[\randxiT\pmb{x}']+\gamma\epsilon ||\pmb{x}'||_{q'})\leq {\rm E}_{\widehat{\mathbb{P}}^N}[\randxiT\pmb{x}']+\epsilon ||\pmb{x}'||_{q'}\leq {\rm E}_{\widehat{\mathbb{P}}^N}[\randxiT\pmb{x}^*]+ \epsilon ||\pmb{x}^*||_{q'}\leq$$
$$  {\rm CVaR}^{\alpha}_{\widehat{\mathbb{P}}^N}[\randxiT\pmb{x}^*]+\gamma\epsilon||\pmb{x}^*||_{q'},$$
which completes the proof.
\end{proof}

\subsection{The support set  $\Xi_{\pmb{a},\pmb{b}}$ and $q=1$}
\label{sec4_2}

In this section, we examine the model investigated in~\cite{WYSZ19}, where the support set is defined as $\Xi_{\pmb{a},\pmb{b}}=\{\pmb{\xi}\in \Rset^n_{+}: \pmb{a}\leq \pmb{\xi} \leq \pmb{b}\}$, so lower $\pmb{a}\in \Rset_{+}^n$  and upper $\pmb{b}\in \Rset_{+}^n$ bounds on the costs are known. To solve $\textsc{Distr}~\mathcal{P}$ for this support set, some mixed integer programs have been constructed in~\cite{WYSZ19}. In this section, we will show how the programs can be simplified. Furthermore, a special case with $q=1$ can be solved more efficiently.

If $\hat{\pmb{\xi}}_i\in \Xi_{\pmb{a},\pmb{b}}$ for each $i\in [N]$, then an optimal solution to $\textsc{Distr}~\mathcal{P}$ will not change if we replace $\Xi_{\pmb{a},\pmb{b}}$ with $\Xi_{\pmb{0},\pmb{b}}$, that is we fix the lower bounds $\pmb{a}=\pmb{0}$. Indeed, a worst probability distribution $\mathbb{P}^N\in \mathcal{B}_{\epsilon}(\widehat{\mathbb{P}}^N)$ for $\pmb{x}$ is of the form
$\mathbb{P}^N=(\hat{\pmb{\xi}}_1+\pmb{\Delta}_1,\dots,\hat{\pmb{\xi}}_N+\pmb{\Delta}_N)$,
where $\pmb{\Delta}_i\in \Rset^n_{+}$ for $i\in [N]$ (all the components of $\pmb{\Delta}_i$ are nonnegative). Therefore $\mathbb{P}^N$ is still a worst probability distribution for $\pmb{x}$ if we fix $\pmb{a}=\pmb{0}$. 

\begin{prop}
\label{propab}
	If the support set is $\Xi_{\pmb{0},\pmb{b}}$, $\alpha=\frac{l}{N}$ for some $l\in [N]$ and $q=1$, then for each $\pmb{x}\in \mathcal{X}$
	\begin{equation}
		\label{case0}
		\max_{\mathbb{P}^N\in\mathcal{B}_{\epsilon}(\widehat{\mathbb{P}}^N)} {\rm CVaR}^{\alpha}_{\mathbb{P}^N}[\randxiT\pmb{x}]=\min\left\{\pmb{b}^T\pmb{x}, {\rm CVaR}^{\alpha}_{\widehat{\mathbb{P}}^{N}}[\randxiT\pmb{x}]+\frac{1}{l}N\epsilon\right\}.
	\end{equation}
\end{prop}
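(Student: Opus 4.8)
The plan is to exploit that, for $\alpha=\frac{l}{N}$, Proposition~\ref{propowa} collapses the weight vector to $w_i=\frac{1}{l}$ for $i\in[l]$ and $w_i=0$ otherwise, so that ${\rm CVaR}^{\alpha}_{\mathbb{P}^N}[\randxiT\pmb{x}]$ is exactly the average of the $l$ largest solution costs (as already noted in the remark following Proposition~\ref{propowa}). Hence I would first rewrite it as a maximum over index subsets,
$$
{\rm CVaR}^{\alpha}_{\mathbb{P}^N}[\randxiT\pmb{x}]=\max_{S\subseteq[N],\,|S|=l}\frac{1}{l}\sum_{i\in S}\pmb{\xi}_i^T\pmb{x},
$$
and then interchange the two maximizations, over $\mathbb{P}^N\in\mathcal{B}_{\epsilon}(\widehat{\mathbb{P}}^N)$ and over $S$, which is always legitimate. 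This reduces the task to solving, for each fixed $S$, the inner maximization $\max_{\mathbb{P}^N}\frac{1}{l}\sum_{i\in S}\pmb{\xi}_i^T\pmb{x}$.

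For the inner problem I would argue structurally. Because the objective involves only the indices in $S$, it is optimal to leave $\pmb{\xi}_i=\hat{\pmb{\xi}}_i$ for $i\notin S$ (this spends no budget), and for $i\in S$ it is never beneficial to decrease any coordinate, so we may assume $\hat{\pmb{\xi}}_i\le\pmb{\xi}_i\le\pmb{b}$. Writing $\hat c_i=\hat{\pmb{\xi}}_i^T\pmb{x}$ and $\bar c=\pmb{b}^T\pmb{x}$, the key observation for $q=1$ is that raising $\pmb{\xi}_i^T\pmb{x}$ by an amount $t_i$ costs exactly $t_i$ in the $\|\cdot\|_1$-budget: one increases coordinates $j$ with $x_j=1$, each unit of which contributes one unit both to $\pmb{\xi}_i^T\pmb{x}$ and to $\|\pmb{\xi}_i-\hat{\pmb{\xi}}_i\|_1$, while the bound $\pmb{\xi}_i\le\pmb{b}$ caps the attainable increase at $\bar c-\hat c_i$. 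Since the marginal gain per unit of budget is the same constant for every $i\in S$, the total budget $N\epsilon$ is poured into these caps greedily, giving the inner optimum
$$
\max_{\mathbb{P}^N}\frac{1}{l}\sum_{i\in S}\pmb{\xi}_i^T\pmb{x}=\frac{1}{l}\Bigl[\sum_{i\in S}\hat c_i+\min\Bigl\{N\epsilon,\;\sum_{i\in S}(\bar c-\hat c_i)\Bigr\}\Bigr].
$$

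Finally I would maximize this expression over $S$. Ordering $\hat c_1\ge\dots\ge\hat c_N$, the top-$l$ set $S^\ast=\{1,\dots,l\}$ simultaneously maximizes $\sum_{i\in S}\hat c_i$ (equal to $l\cdot{\rm CVaR}^{\alpha}_{\widehat{\mathbb{P}}^N}[\randxiT\pmb{x}]$) and minimizes the total cap $D(S)=\sum_{i\in S}(\bar c-\hat c_i)$, whose minimum value is $D^\ast=l\bigl(\bar c-{\rm CVaR}^{\alpha}_{\widehat{\mathbb{P}}^N}[\randxiT\pmb{x}]\bigr)$. A short case split then finishes the argument: if $N\epsilon<D^\ast$, the budget is slack for every admissible $S$, the best choice is $S^\ast$, and the value equals ${\rm CVaR}^{\alpha}_{\widehat{\mathbb{P}}^N}[\randxiT\pmb{x}]+\frac{1}{l}N\epsilon$, which is then $\le\bar c$; if $N\epsilon\ge D^\ast$, the cap is reached at $S^\ast$ and no $S$ can exceed it, so the value saturates at $\bar c=\pmb{b}^T\pmb{x}$, which is then $\le{\rm CVaR}^{\alpha}_{\widehat{\mathbb{P}}^N}[\randxiT\pmb{x}]+\frac{1}{l}N\epsilon$. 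In both regimes the maximum equals $\min\{\pmb{b}^T\pmb{x},\,{\rm CVaR}^{\alpha}_{\widehat{\mathbb{P}}^N}[\randxiT\pmb{x}]+\frac{1}{l}N\epsilon\}$, as claimed. I expect the main obstacle to be the inner maximization: justifying rigorously (via an exchange argument) that the $\ell_1$-budget converts into objective increase at unit rate, that a greedy cap-filling allocation is optimal, and that perturbing only the coordinates with $x_j=1$ loses nothing.
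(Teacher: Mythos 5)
Your proof is correct and rests on the same two ingredients as the paper's: the top-$l$-average representation of CVaR from Proposition~\ref{propowa}, and the fact that for $q=1$ the $\ell_1$-budget converts into an increase of $\pmb{\xi}_i^T\pmb{x}$ at unit rate on the coordinates with $x_j=1$, capped at $\pmb{b}^T\pmb{x}$. The paper packages this as an upper bound plus an explicit worst-case distribution supported on the top-$l$ scenarios, with the same two-case (budget binds vs.\ cap binds) split you use, so the arguments are essentially the same; your max-interchange and closed-form inner solution merely reorganize it.
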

\begin{proof}
	Fix $\pmb{x}\in \mathcal{X}$ and assume that $\hat{\pmb{\xi}}_1^T\pmb{x}\geq\dots\geq\hat{\pmb{\xi}}_N^T\pmb{x}$. 
We show first that
	 \begin{equation}
		\label{case01}
		\max_{\mathbb{P}^N\in\mathcal{B}_{\epsilon}(\widehat{\mathbb{P}}^N)} {\rm CVaR}^{\alpha}_{\mathbb{P}^N}[\randxiT\pmb{x}]\leq\min\left\{\pmb{b}^T\pmb{x}, {\rm CVaR}^{\alpha}_{\widehat{\mathbb{P}}^{N}}[\randxiT\pmb{x}]+\frac{1}{l}N\epsilon\right\}.
	\end{equation}
Indeed, the inequality $\max_{\mathbb{P}^N\in\mathcal{B}_{\epsilon}(\widehat{\mathbb{P}}^N)} {\rm CVaR}^{\alpha}_{\mathbb{P}^N}[\randxiT\pmb{x}]\leq \pmb{b}^T\pmb{x}$ follows from the fact that $\pmb{b}^T\pmb{x}$ is the maximum cost of $\pmb{x}$ in $\Xi_{\pmb{0},\pmb{b}}$. Using Proposition~\ref{propowa}, we get for each $\mathbb{P}^N=(\hat{\pmb{\xi}}_1+\pmb{\Delta}_1,\dots, \hat{\pmb{\xi}}_N+\pmb{\Delta}_N)\in \mathcal{B}_{\epsilon}(\widehat{\mathbb{P}}^N)$
	 $$
		 {\rm CVaR}^{\alpha}_{\mathbb{P}^N}[\randxiT\pmb{x}] =\frac{1}{l}\sum_{i=1}^l (\hat{\pmb{\xi}}_{\sigma(i)}+\pmb{\Delta}_{\sigma(i)})^T\pmb{x}
		 \leq \sum_{i=1}^l \hat{\pmb{\xi}}_i^T\pmb{x}+\frac{1}{l}\sum_{i=1}^l ||\pmb{\Delta}_{\sigma(i)}||_1\leq {\rm CVaR}^{\alpha}_{\widehat{\mathbb{P}}^{N}}[\randxiT\pmb{x}]+\frac{1}{l}N\epsilon
	$$
and~(\ref{case01}) follows.  Consider a probability distribution  of the form
	$$\mathbb{P}^{'N}=(\hat{\pmb{\xi}}_1+\pmb{\Delta}_1,\dots,\hat{\pmb{\xi}}_l+\pmb{\Delta}_l,\hat{\pmb{\xi}}_{l+1},\dots,\hat{\pmb{\xi}}_N)\in \mathcal{B}_{\epsilon}(\widehat{\mathbb{P}}^N),$$
	where $\sum_{i=1}^l ||\pmb{\Delta}_i||_{1}\leq N\epsilon$,  $\pmb{\Delta}_i\geq \pmb{0}$, and $\Delta_{ij}=0$ if $x_j=0$ for $i\in [N]$, $j\in [n]$. Using Proposition~\ref{propowa}, we get
	$${\rm CVaR}^{\alpha}_{\mathbb{P}^{'N}}[\randxiT\pmb{x}]=\frac{1}{l}\sum_{i=1}^l (\hat{\pmb{\xi}}_{i}+\pmb{\Delta}_{i})^T\pmb{x}={\rm CVaR}^{\alpha}_{\widehat{\mathbb{P}}^{N}}[\randxiT\pmb{x}]+\frac{1}{l}\sum_{i=1}^l \pmb{\Delta}_{i}^T\pmb{x}={\rm CVaR}^{\alpha}_{\widehat{\mathbb{P}}^{N}}[\randxiT\pmb{x}]+\frac{1}{l}\sum_{i=1}^l ||\pmb{\Delta}_i||_{1}.$$ 
	Consider two cases:
	\begin{enumerate}
		\item  $\sum_{i=1}^l ||\pmb{\Delta}_i||_1<N\epsilon$. This means that either $(\hat{\pmb{\xi}}_i+\pmb{\Delta}_i)^T\pmb{x}=\pmb{b}^T\pmb{x}$ for each $i=1,\dots,l$  or we can further increase $\Delta_{ij}$ for some $x_j=1$ so that $\sum_{i=1}^l ||\pmb{\Delta}_i||_1=N\epsilon$ (and we go to Case~2). In the former case (see Proposition~\ref{propowa})
		$$ {\rm CVaR}^{\alpha}_{\mathbb{P}^{'N}}[\randxiT\pmb{x}]=\pmb{b}^T\pmb{x}\leq {\rm CVaR}^{\alpha}_{\widehat{\mathbb{P}}^{N}}[\randxiT\pmb{x}]+\frac{1}{l}N\epsilon. $$
		\item $\sum_{i=1}^l ||\pmb{\Delta}_i||_1=N\epsilon$. Then
		$$ {\rm CVaR}^{\alpha}_{\mathbb{P}^{'N}}[\randxiT\pmb{x}]={\rm CVaR}^{\alpha}_{\widehat{\mathbb{P}}^{N}}[\randxiT\pmb{x}]+\frac{1}{l}N\epsilon\leq \pmb{b}^T\pmb{x},$$	
	where the last inequality follows from the fact that $\pmb{b}^T\pmb{x}$ is an upper bound on $ {\rm CVaR}^{\alpha}_{\mathbb{P}^{'N}}[\randxiT\pmb{x}]$.
	\end{enumerate}
The above cases show that equality in~(\ref{case01}) holds.
\end{proof}
We can find an optimal solution to $\textsc{Distr}~\mathcal{P}$ for the problem satisfying the assumptions of Proposition~\ref{propab} by solving two easier problems. Let $\pmb{x}_1\in \mathcal{X}$ minimize $\pmb{b}^T\pmb{x}$ and $\pmb{x}_2\in \mathcal{X}$ minimize $ {\rm CVaR}^{\alpha}_{\widehat{\mathbb{P}}^{N}}[\randxiT\pmb{x}]$. Solution $\pmb{x}_1$ can be found be solving the deterministic problem $\mathcal{P}$, while $\pmb{x}_2$ can be computed using the MIP formulation~(\ref{mincv1}). Let $\pmb{x}'$ be the solution among $\pmb{x}_1$, $\pmb{x}_2$ that has smaller value of~(\ref{case0}) breaking ties arbitrarily. Assume that $\pmb{x}^*\in \mathcal{X}$ is an optimal solution to $\textsc{Distr}~\mathcal{P}$. Then
$$\min\left\{\pmb{b}^T\pmb{x}^*, {\rm CVaR}^{\alpha}_{\widehat{\mathbb{P}}^{N}}[\randxiT\pmb{x}^*]+\frac{1}{l}N\epsilon\right\}\geq \min\left\{\pmb{b}^T\pmb{x}', {\rm CVaR}^{\alpha}_{\widehat{\mathbb{P}}^{N}}[\randxiT\pmb{x}']+\frac{1}{l}N\epsilon\right\}$$
and, according to Proposition~\ref{propab}, $\pmb{x}'$ is also optimal to $\textsc{Distr}~\mathcal{P}$. The algorithm for computing $\pmb{x}'$ can be faster than solving the MIP formulation proposed in~\cite{WYSZ19}.

\subsection{A general convex support set $\Xi$ and $q\in \overline{\Rset}_{\geq 1}$}
\label{sec4_3}

In this section, we will propose some methods of solving $\textsc{Distr}~\mathcal{P}$ for any bounded, closed, and convex support $\Xi$ and $q\in \overline{\Rset}_{\geq 1}$. To simplify the presentation, we will also assume that $\alpha=\frac{l}{N}$ for $l\in [N]$.
Let us first rewrite~(\ref{pf}) as follows:

\begin{equation}
\label{rg0}
	\begin{array}{llll}
		\min & z\\
			& {\rm CVaR}^{\alpha}_{\mathbb{P}^N}[\randxiT\pmb{x}]\leq z, &\forall  \mathbb{P}^N \in \mathcal{B}_{\epsilon}(\widehat{\mathbb{P}}^N)\\
			& \pmb{x}\in \mathcal{X}.
	\end{array}
\end{equation}

\begin{prop}
\label{propmodg}
If $\alpha=\frac{l}{N}$ for $l\in [N]$, then for each $\mathbb{P}^N=(\pmb{\xi}_1,\dots,\pmb{\xi}_N)$
$$
{\rm CVaR}^{\alpha}_{\mathbb{P}^N}[\randxiT\pmb{x}]=\max_{\overset{A\subseteq \{1,\dots,N\}}{|A|=l}}\frac{1}{l}\left(\sum_{i\in A} \pmb{\xi}_i^T\right)\pmb{x}$$
\end{prop}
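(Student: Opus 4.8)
The plan is to prove the claimed identity by showing that when $\alpha = \frac{l}{N}$, the Conditional Value at Risk equals the average of the $l$ largest solution costs, and that this average is exactly the maximum over all $l$-element subsets of the index set. First I would invoke Proposition~\ref{propowa}: since $\alpha = \frac{l}{N}$ falls into the case $\alpha \geq \frac{1}{N}$ (assuming $l \geq 1$), the weight vector from~(\ref{owaw}) becomes $\pmb{w} = (\underbrace{\frac{1}{l},\dots,\frac{1}{l}}_{l},0,\dots,0)$, because $\frac{1}{\alpha N} = \frac{1}{l}$ and the last positive weight is $1 - \frac{l-1}{\alpha N} = 1 - \frac{l-1}{l} = \frac{1}{l}$, so all $l$ positive weights collapse to the uniform value $\frac{1}{l}$. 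Consequently
\[
{\rm CVaR}^{\alpha}_{\mathbb{P}^N}[\randxiT\pmb{x}] = \frac{1}{l}\sum_{i=1}^l \pmb{\xi}_{\sigma(i)}^T\pmb{x},
\]
where $\sigma$ orders the costs in nonincreasing order, i.e.\ this is precisely the average of the $l$ largest values among $\pmb{\xi}_1^T\pmb{x},\dots,\pmb{\xi}_N^T\pmb{x}$.

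The remaining step is to identify the average of the $l$ largest costs with the maximum over $l$-subsets. The key observation is that for any fixed $\pmb{x}$, the quantity $\frac{1}{l}\left(\sum_{i\in A}\pmb{\xi}_i^T\right)\pmb{x}$ is maximized over all $A$ with $|A|=l$ exactly when $A$ selects the $l$ indices yielding the largest values $\pmb{\xi}_i^T\pmb{x}$. I would argue this by a standard exchange/greedy argument: if an optimal $A$ omitted some index $j$ with $\pmb{\xi}_j^T\pmb{x}$ larger than $\pmb{\xi}_k^T\pmb{x}$ for some $k\in A$, swapping $k$ for $j$ would not decrease (in fact would not decrease, and strictly increase unless equal) the sum, so choosing the top $l$ indices $\{\sigma(1),\dots,\sigma(l)\}$ attains the maximum. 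This gives
\[
\max_{\overset{A\subseteq\{1,\dots,N\}}{|A|=l}}\frac{1}{l}\left(\sum_{i\in A}\pmb{\xi}_i^T\right)\pmb{x} = \frac{1}{l}\sum_{i=1}^l \pmb{\xi}_{\sigma(i)}^T\pmb{x},
\]
and combining with the CVaR expression above yields the proposition.

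I do not expect any serious obstacle here; the result is essentially a repackaging of Proposition~\ref{propowa} in the special case $\alpha=\frac{l}{N}$, combined with the elementary fact that selecting the largest $l$ terms maximizes an $l$-term subset sum. The only point requiring mild care is the verification that the $l$ positive OWA weights are genuinely uniform at $\frac{1}{l}$ when $\alpha = \frac{l}{N}$ (the boundary case of the interval $(\frac{l-1}{N},\frac{l}{N}]$), which follows from the direct substitution shown above. A subsidiary point worth noting is that the indexing of $\sigma$ depends on $\pmb{x}$, but since the proposition fixes $\pmb{x}$ on both sides this causes no difficulty; the ordering and the optimal subset are chosen for that particular $\pmb{x}$.
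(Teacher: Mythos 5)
Your proposal is correct and follows essentially the same route as the paper, which simply notes that the result is a direct consequence of Proposition~\ref{propowa} (the CVaR being the average of the $l$ largest costs when $\alpha=\frac{l}{N}$); you merely spell out the weight computation and the elementary subset-maximum argument that the paper leaves implicit.
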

\begin{proof}
This is a direct consequence of Proposition~\ref{propowa}. If $\alpha=\frac{l}{N}$ for some $l\in [N]$, then the Conditional Value at Risk is the average of the $l$ largest costs of $\pmb{x}$ under $\mathbb{P}^N=(\pmb{\xi}_1,\dots,\pmb{\xi}_N)$.
\end{proof}
Using Proposition~\ref{propmodg}, we can represent~(\ref{rg0}) as
\begin{equation}
\label{rg1}
	\begin{array}{llll}
		\min & z\\
			& \displaystyle \frac{1}{l}\left(\sum_{i\in A} \pmb{\xi}_i^T\right)\pmb{x}\leq z, &\forall  (\pmb{\xi}_1,\dots,\pmb{\xi}_N) \in \mathcal{B}_{\epsilon}(\widehat{\mathbb{P}}^N), \forall A\subseteq \{1,\dots,n\}, |A|=l,\\
			& \pmb{x}\in \mathcal{X}.
	\end{array}
\end{equation}
Let us define the uncertainty set
 $$\mathcal{U}=\left\{\frac{1}{l}\sum_{i\in A}\pmb{\xi}_i:  (\pmb{\xi}_1,\dots,\pmb{\xi}_N)\in \mathcal{B}_{\epsilon}(\widehat{\mathbb{P}}^N), A\subseteq \{1,\dots,N\}, |A|=l\right\}.$$
 We can then represent~(\ref{rg1}) more briefly as follows:
\begin{equation}
\label{rg2}
	\begin{array}{llll}
		\min & z\\
			& \displaystyle  \pmb{\zeta}^T\pmb{x}\leq z, &\forall  \pmb{\zeta}\in \mathcal{U},\\
			& \pmb{x}\in \mathcal{X}.
	\end{array}
\end{equation}

Problem~(\ref{rg2}) can be solved by using a standard row generation algorithm shown in Figure~\ref{alg1}. For any finite subset $\mathcal{U}' \subseteq \mathcal{U}$~(\ref{rg2}) is a mixed integer program. By solving it, we get a solution $\pmb{x}^*\in \mathcal{X}$ with a lower bound $z^*_{LB}$ on the optimal objective value $z^*$ of~(\ref{rg2}). We then compute a worst probability distribution $\mathbb{P}^N=(\pmb{\xi}_1,\dots,\pmb{\xi}_N)\in \mathcal{B}_{\epsilon}(\widehat{\mathbb{P}}^N)$ for~$\pmb{x}^*$. Evaluating ${\rm CVaR}^{\alpha}_{\mathbb{P}^N}[\randxiT\pmb{x}^*]$ gives us an upper bound $z^*_{UB}$ on the optimal objective value of~(\ref{rg2}). The cut for $\pmb{x}^*$ can be generated by identifying the subset of $l$ worst realizations for $\pmb{x}^*$ in $\mathbb{P}^N$. Namely, $\pmb{\zeta}=\frac{1}{l}\sum_{i=1}^l \pmb{\xi}_{\sigma(i)}$, where $\pmb{\xi}_{\sigma(1)}^T\pmb{x}^*\geq \dots\geq \pmb{\xi}_{\sigma(n)}^T\pmb{x}^*$, is added to $\mathcal{U}'$. We have to initialize the set $\mathcal{U}'$ to start the algorithm. In the simplest approach, we can add one realization, say $\pmb{\zeta}=\frac{1}{l}\sum_{i=1}^l \hat{\pmb{\xi}}_i$ to $\mathcal{U}'$.

\begin{figure}[ht]
\begin{algorithmic}[1]
\STATE Generate a nonempty subset $\mathcal{U}'\subseteq \mathcal{U}$.
\STATE  $z^*_{UB}:=+\infty$
\REPEAT
\STATE Solve the problem~(\ref{rg2})  with  $\mathcal{U}'$ obtaining $\pmb{x}^*$ with $z^*_{LB}$
\STATE Compute a worst probability distribution $\mathbb{P}^N=(\pmb{\xi}_1,\dots,\pmb{\xi}_N)\in \mathcal{B}_{\epsilon}(\widehat{\mathbb{P}}^N)$ for $\pmb{x}^*$.
\IF{${\rm CVaR}^{\alpha}_{\mathbb{P}^N}[\randxiT\pmb{x}^*]<z^*_{UB}$}
	\STATE $\pmb{x}':=\pmb{x}^*$
\ENDIF
\STATE $z^*_{UB}:=\min\{z^*_{UB}, {\rm CVaR}^{\alpha}_{\mathbb{P}^N}[\randxiT\pmb{x}^*]\}$
\STATE $\mathcal{U}':=\mathcal{U}' \cup \{\frac{1}{l}\sum_{i=1}^l \pmb{\xi}_{\sigma(i)}\}$, where $\pmb{\xi}_{\sigma(1)}^T\pmb{x}^*\geq \dots\geq \pmb{\xi}_{\sigma(n)}^T\pmb{x}^*$
\UNTIL{$(z^*_{UB}-z^*_{LB})/z^*_{LB}\leq \varepsilon$}
\STATE \textbf{return} $\pmb{x}'$
\end{algorithmic}
\caption{A row generation algorithm.}\label{alg1}
\end{figure}

Unfortunately, computing a worst probability distribution for $\pmb{x}^*$ in step~5 of the algorithm is not a trivial task. Using Lemma~\ref{lemworst}, we will consider the following optimization problem:
 $$\max_{\mathbb{P}^N \in \mathcal{B}_{\epsilon}(\widehat{\mathbb{P}}^N)} {\rm CVaR}^{\alpha}_{\mathbb{P}^N}[\randxiT\pmb{x}], $$
 where the solution $\pmb{x}\in \mathcal{X}$ is fixed.
 If $\alpha=\frac{l}{N}$ for some $l\in [N]$, then, according to Proposition~\ref{propowa}, ${\rm CVaR}^{\alpha}_{\mathbb{P}^N}[\randxiT\pmb{x}]$ is the average of the $l$ largest costs under $\mathbb{P}^N$. Therefore, the problem can be expressed as follows:
 \begin{equation}
 \label{f0}
 	\begin{array}{lll}
		\max  & \displaystyle \frac{1}{l}\sum_{i=1}^N  y_i\pmb{\xi}_i^T\pmb{x}\\
		& \displaystyle \sum_{i=1}^N ||\pmb{\xi}_i- \hat{\pmb{\xi}}_i||_q\leq N\epsilon,\\
		& \displaystyle \sum_{i=1}^N y_i=l, \\
		& \pmb{\xi}_i\in \Xi, &  i\in [N],\\
		& y_i\in \{0,1\}, & i\in [N].
	\end{array}
\end{equation}
 The nonlinear terms $y_i\pmb{\xi}_i^T\pmb{x}$ in~(\ref{f0}) can be linearized using standard techniques. Problem~(\ref{f0}) can be solved by using some available solvers, for example, CPLEX~\cite{CPLEX}, when $q=1,2,\infty$ and $\Xi$ is described by linear or second-order cone constraints. Observe that~(\ref{f0}) becomes a tractable convex problem when $l=N$ or $l=1$. In the former case, the CVaR criterion is the expectation, while in the latter one, it is the maximum cost of $\pmb{x}$ under $\mathbb{P}^N$.    In general, (\ref{f0}) is tractable when $N$ is not large, which is true in many practical applications when the available sample size is small.

\subsubsection{Approximation algorithm}
\label{sec4_4}

 In this section, we propose a method for computing approximate solutions to $\textsc{Distr}~\mathcal{P}$ with a convex, closed, and bounded support set $\Xi$. Let $\overline{\pmb{\xi}}$ maximize $\pmb{1}^T\pmb{\xi}$ over $\pmb{\xi}\in \Xi$, where $\pmb{1}=(1,1,\dots,1)^T$. Suppose that $\alpha\in (\frac{l-1}N, \frac{l}{N}]$ for some $l\in [N]$.
Let $c\geq 1$ be the smallest constant such that
\begin{equation}
\label{defc}
||\overline{\pmb{\xi}}-\hat{\pmb{\xi}}_i||_q\leq c \frac{\epsilon N}{l},  \;\; i\in [N].
\end{equation}
Define $\mathbb{P}^{'N}=(\pmb{\xi}_1',\dots,\pmb{\xi}_N')$, where
 $$\pmb{\xi}'_i=\hat{\pmb{\xi}}_i+\frac{\overline{\pmb{\xi}}-\hat{\pmb{\xi}}_i}{c}, \; i\in [N].$$
  Since $\hat{\pmb{\xi}}_i\in \Xi$ and $\overline{\pmb{\xi}}\in \Xi$, we get $\pmb{\xi}_i'\in \Xi$. Indeed, $\pmb{\xi}_i'=\frac{c-1}{c}\hat{\pmb{\xi}}_i+\frac{1}{c}\overline{\pmb{\xi}}$, which is a convex combination of $\hat{\pmb{\xi}}_i'$ and $\overline{\pmb{\xi}}$ for each $c\geq 1$. Vector $\pmb{\xi}_i'$ lies on the line segment joining $\hat{\pmb{\xi}}_i$ and $\overline{\pmb{\xi}}$. The distribution $\mathbb{P}^{'N}$ can be seen as a distorted sample $\widehat{\mathbb{P}}^N$ in which the realizations $\hat{\pmb{\xi}}_i$ are moved towards a bad realization $\overline{\pmb{\xi}}$.
   Let $\pmb{\zeta}=(\zeta_1,\dots,\zeta_n)\in \Rset^n_{+}$, where $\zeta_i$ maximizes the component $\xi_i$ over $\pmb{\xi}\in \Xi$. An example is shown in Figure~\ref{fig4}.
 
  \begin{figure}[ht]
\centering
\includegraphics[height=6cm]{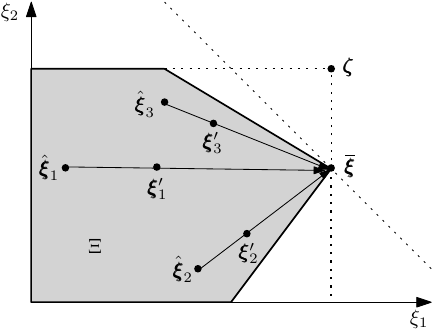}
\caption{A sample support set $\Xi\subseteq \mathbb{R}^2$, distribution $\mathbb{P}^{'N}=(\pmb{\xi}_1',\pmb{\xi}_2',\pmb{\xi}_3')$, $\overline{\pmb{\zeta}}$.}\label{fig4}
\end{figure}
 
 \begin{thm}
	If  $\pmb{x}'\in \mathcal{X}$ minimizes ${\rm CVaR}^{\alpha}_{\mathbb{P}'}[\randxiT\pmb{x}]$ and $\pmb{\zeta}^{T}\pmb{x}'\leq b\cdot \overline{\pmb{\xi}}^T\pmb{x}'$ for some constant $b\geq 1$. Then $\pmb{x}'$ is a $(b\cdot c)$-approximate solution to $\textsc{Distr}~\mathcal{P}$.
\end{thm}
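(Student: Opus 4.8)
The plan is to sandwich the robust objective of $\textsc{Distr}~\mathcal{P}$, which I denote $F(\pmb{x})=\max_{\mathbb{P}^N\in\mathcal{B}_{\epsilon}(\widehat{\mathbb{P}}^N)}{\rm CVaR}^{\alpha}_{\mathbb{P}^N}[\randxiT\pmb{x}]$, between the two quantities that the construction actually controls: the componentwise-maximum cost $\pmb{\zeta}^T\pmb{x}$ from above, and the distorted CVaR $g(\pmb{x}):={\rm CVaR}^{\alpha}_{\mathbb{P}^{'N}}[\randxiT\pmb{x}]$ from below. Concretely, I will establish the chain
$$F(\pmb{x}')\le \pmb{\zeta}^T\pmb{x}'\le b\,\overline{\pmb{\xi}}^T\pmb{x}'\le b\,c\,g(\pmb{x}')\le b\,c\,g(\pmb{x}^*)\le b\,c\,F(\pmb{x}^*),$$
where $\pmb{x}^*$ is optimal for $\textsc{Distr}~\mathcal{P}$. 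The first inequality uses the structure of the support, the second is exactly the hypothesis, the fourth is optimality of $\pmb{x}'$ for $g$, and the last inequality $g\le F$ is the heart of the argument.

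First I would record the key algebraic identity. Since $\pmb{\xi}_i'=\tfrac{c-1}{c}\hat{\pmb{\xi}}_i+\tfrac1c\overline{\pmb{\xi}}$, each realization cost under $\mathbb{P}^{'N}$ equals $\tfrac{c-1}{c}\hat{\pmb{\xi}}_i^T\pmb{x}+\tfrac1c\overline{\pmb{\xi}}^T\pmb{x}$, an increasing affine map of $\hat{\pmb{\xi}}_i^T\pmb{x}$ with slope $\tfrac{c-1}{c}\ge 0$. Such a map preserves the ordering of the realizations, and since by Proposition~\ref{propowa} the CVaR is an OWA whose weights sum to one, it commutes with the map, giving $g(\pmb{x})=\tfrac{c-1}{c}{\rm CVaR}^{\alpha}_{\widehat{\mathbb{P}}^{N}}[\randxiT\pmb{x}]+\tfrac1c\overline{\pmb{\xi}}^T\pmb{x}$. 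Because all costs are nonnegative, ${\rm CVaR}^{\alpha}_{\widehat{\mathbb{P}}^{N}}\ge 0$, so $g(\pmb{x}')\ge\tfrac1c\overline{\pmb{\xi}}^T\pmb{x}'$, which is the third inequality. For the first inequality, note that for any $\mathbb{P}^N\in\mathcal{B}_{\epsilon}(\widehat{\mathbb{P}}^N)$ each $\pmb{\xi}_i\in\Xi$ satisfies $\pmb{\xi}_i^T\pmb{x}'\le\pmb{\zeta}^T\pmb{x}'$ (as $\pmb{x}'\ge\pmb{0}$ and $\zeta_j$ maximizes the $j$-th coordinate over $\Xi$), so the OWA of these costs, and hence the maximum over $\mathbb{P}^N$, is at most $\pmb{\zeta}^T\pmb{x}'$.

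The main obstacle is the last inequality $g(\pmb{x})\le F(\pmb{x})$: I must exhibit a single distribution in $\mathcal{B}_{\epsilon}(\widehat{\mathbb{P}}^N)$ whose CVaR already reaches $g(\pmb{x})$. The difficulty is that $\mathbb{P}^{'N}$ itself need not be feasible, since moving all $N$ realizations, each by up to $\epsilon N/l$, would spend a transport budget of up to $\epsilon N^2/l$, which exceeds $\epsilon N$ whenever $l<N$. The fix is to perturb only the realizations that carry CVaR weight. Let $A_0$ be the index set of the $l$ largest costs $\hat{\pmb{\xi}}_i^T\pmb{x}$ and define $\mathbb{P}''$ by $\pmb{\xi}_i''=\pmb{\xi}_i'$ for $i\in A_0$ and $\pmb{\xi}_i''=\hat{\pmb{\xi}}_i$ otherwise. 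Only $l$ realizations move, each by $\tfrac1c||\overline{\pmb{\xi}}-\hat{\pmb{\xi}}_i||_q\le \epsilon N/l$ by the defining inequality~(\ref{defc}), so the total transport cost is at most $\epsilon N$ and $\mathbb{P}''\in\mathcal{B}_{\epsilon}(\widehat{\mathbb{P}}^N)$.

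To avoid re-sorting issues after the perturbation, I would invoke the dual representation~(\ref{f01}) of the CVaR, whose feasible region $\{\pmb{w}:\sum_i w_i=1,\ 0\le w_i\le \tfrac1{\alpha N}\}$ does not depend on the distribution. Let $\pmb{w}^*$ be the optimal CVaR weights for $\widehat{\mathbb{P}}^{N}$ at $\pmb{x}$; by the greedy solution in the proof of Proposition~\ref{propowa} they are supported on $A_0$ with $\sum_{i\in A_0}w_i^*=1$ and $\sum_{i\in A_0}w_i^*\hat{\pmb{\xi}}_i^T\pmb{x}={\rm CVaR}^{\alpha}_{\widehat{\mathbb{P}}^{N}}[\randxiT\pmb{x}]$. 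Feasibility of this same $\pmb{w}^*$ for $\mathbb{P}''$ yields $F(\pmb{x})\ge{\rm CVaR}^{\alpha}_{\mathbb{P}''}[\randxiT\pmb{x}]\ge\sum_{i\in A_0}w_i^*\,\pmb{\xi}_i'^T\pmb{x}$, and substituting the affine identity on $A_0$ reproduces exactly $g(\pmb{x})$. Chaining the five inequalities then gives $F(\pmb{x}')\le b\,c\,F(\pmb{x}^*)$, so $\pmb{x}'$ is a $(b\cdot c)$-approximate solution, as claimed.
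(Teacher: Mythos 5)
Your proof is correct and follows essentially the same route as the paper's: both sandwich the robust objective via the distorted CVaR $g$, establish $g\leq F$ by constructing a feasible distribution that perturbs only the $l$ realizations carrying CVaR weight (with the budget check from~(\ref{defc})), and close the loop through $\overline{\pmb{\xi}}^T\pmb{x}'$ and $\pmb{\zeta}^T\pmb{x}'$. Your use of the fixed dual weights $\pmb{w}^*$ to handle the re-sorting after the perturbation is a slightly more careful justification of the step the paper dismisses as obvious, but it is the same underlying construction.
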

\begin{proof}

  Let $\pmb{x}^*$ be an optimal solution to $\textsc{Distr}~\mathcal{P}$ with a worst  probability distribution $\mathbb{P}^{*N}\in \mathcal{B}_{\epsilon}(\widehat{\mathbb{P}}^N) $. We will show that
\begin{equation}
\label{z00}
	{\rm CVaR}^{\alpha}_{\mathbb{P}^{*N}}[\randxiT\pmb{x}^*]\geq {\rm CVaR}^{\alpha}_{\mathbb{P}^{'N}}[\randxiT\pmb{x}^*]\geq {\rm CVaR}^{\alpha}_{\mathbb{P}^{'N}}[\randxiT\pmb{x}'].
\end{equation}
The second inequality in~(\ref{z00}) follows directly from the assumption of the theorem. It remains to show the first one.
Let us represent $\mathbb{P}^{'N}$ as follows
$$\mathbb{P}^{'N}=(\hat{\pmb{\xi}}_{1}+\pmb{\Delta}'_{1},\dots,\hat{\pmb{\xi}}_{N}+\pmb{\Delta}'_N),$$
where $\pmb{\Delta}'_i\in \Rset^n$ for $i\in [N]$.
Using Proposition~\ref{propowa}, we get
$${\rm CVaR}^{\alpha}_{\mathbb{P}^{'N}}[\randxiT\pmb{x}^*]=\sum_{i=1}^l w_i(\hat{\pmb{\xi}}_{\sigma(i)}+\pmb{\Delta}'_{\sigma(i)})^T\pmb{x}^*,$$
where $(\hat{\pmb{\xi}}_{\sigma(1)}+\pmb{\Delta}'_{\sigma(1)})^T\pmb{x}^*\geq \dots \geq (\hat{\pmb{\xi}}_{\sigma(l)}+\pmb{\Delta}'_{\sigma(l)})^T\pmb{x}^*\geq \dots\geq (\hat{\pmb{\xi}}_{\sigma(N)}+\pmb{\Delta}'_{\sigma(N)})^T\pmb{x}^* $ and $\sigma$ is permutation of $[N]$.
Let us construct a probability distribution $\mathbb{P}^{''N}$ in which $\pmb{\xi}^{''}_i=\hat{\pmb{\xi}}_i+\pmb{\Delta}'_i=\hat{\pmb{\xi}}_i+\frac{1}{c}(\overline{\pmb{\xi}}-\hat{\pmb{\xi}}_i)$
 if $i\in \{\sigma(1),\dots,\sigma(l)\}$ and $\pmb{\xi}^{''}_i=\hat{\pmb{\xi}}_i$, otherwise. Obviously
\begin{equation}
\label{z0}
{\rm CVaR}^{\alpha}_{\mathbb{P}^{'N}}[\pmb{\xi}^T\pmb{x}^*]={\rm CVaR}^{\alpha}_{\mathbb{P}^{''N}}[\pmb{\xi}^T\pmb{x}^*].
\end{equation}
and
$$\frac{1}{N}\sum_{i=1}^N||\pmb{\xi}''_i-\hat{\pmb{\xi}}_i||_q= 
\frac{1}{N}\sum_{i=1}^l \left \lVert\frac{\overline{\pmb{\xi}}-\hat{\pmb{\xi}}_{\sigma(i)}}{c}\right \rVert_q=\frac{1}{N}\sum_{i=1}^l \frac{1}{c}||\overline{\pmb{\xi}}-\hat{\pmb{\xi}}_{\sigma(i)}||_q\leq \frac{1}{N}\sum_{i=1}^l\frac{\epsilon N}{l}= \epsilon,$$
where the last inequality follows from~(\ref{defc}).
Hence, we conclude that $\mathbb{P}^{''N}\in \mathcal{B}_{\epsilon}(\widehat{\mathbb{P}}^N)$. Consequently, 
${\rm CVaR}^{\alpha}_{\mathbb{P}^{''N}}[\randxiT\pmb{x}^*]\leq  {\rm CVaR}^{\alpha}_{\mathbb{P}^{*N}}[\randxiT\pmb{x}^*]$, which together with~(\ref{z0}) imply~(\ref{z00}).
Let $$\mathbb{P}_u^N=\left(\frac{\overline{\pmb{\xi}}}{c},\dots,\frac{\overline{\pmb{\xi}}}{c}\right).$$
Because $\hat{\pmb{\xi}}_i\in \Rset^n_+$, $c\geq 1$, and thus $(\hat{\pmb{\xi}}_i+\frac{\overline{\pmb{\xi}}-\hat{\pmb{\xi}}_i}{c})^T\pmb{x}'=((1-\frac{1}{c})\hat{\pmb{\xi}}_i+\frac{1}{c}\overline{\pmb{\xi}})^T\pmb{x}'\geq \frac{1}{c}\overline{\pmb{\xi}}^{T}\pmb{x}'$ for each $i\in [N]$, we obtain
 \begin{equation}
 \label{z01}
 {\rm CVaR}^\alpha_{\mathbb{P}^{'N}}[\randxiT\pmb{x}']\geq  {\rm CVaR}^\alpha_{\mathbb{P}_u^{N}}[\randxiT\pmb{x}']=\frac{1}{c}\overline{\pmb{\xi}}^T\pmb{x}'\geq \frac{1}{cb}\pmb{\zeta}^T\pmb{x}'\geq
  \frac{1}{bc}{\rm CVaR}^\alpha_{\mathbb{P}^{N}}[\randxiT\pmb{x}']
 \end{equation}
 for each $\mathbb{P}^N\in\mathcal{B}_{\epsilon}(\widehat{\mathbb{P}}^N)$. Inequalities~(\ref{z01}) and~(\ref{z00}) imply the theorem.
\end{proof}

For the support set $\Xi_{\pmb{a},\pmb{b}}$, we get $\pmb{\zeta}=\overline{\pmb{\xi}}$, which leads to the following result.
 \begin{thm}
	If the support set is $\pmb{\Xi}_{\pmb{a},\pmb{b}}$ and $\pmb{x}'$ minimizes ${\rm CVaR}^{\alpha}_{\mathbb{P}'}[\randxiT\pmb{x}]$, then $\pmb{x}'$ is a $c$-approximate solution to $\textsc{Distr}~\mathcal{P}$.
\end{thm}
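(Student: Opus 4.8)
The plan is to obtain this statement as an immediate corollary of the preceding theorem, so the whole task reduces to verifying that the hyperrectangular structure of the support collapses the two extremal points $\overline{\pmb{\xi}}$ and $\pmb{\zeta}$ onto each other, after which we apply the previous theorem with its scalar constant equal to~$1$.

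First I would pin down $\overline{\pmb{\xi}}$ and $\pmb{\zeta}$ explicitly for $\Xi_{\pmb{a},\pmb{b}}=\{\pmb{\xi}\in\Rset^n_+:\pmb{a}\leq\pmb{\xi}\leq\pmb{b}\}$. Since the box is a product of intervals, maximizing $\pmb{1}^T\pmb{\xi}=\sum_{i=1}^n\xi_i$ decouples across coordinates, so the maximizer may be taken to be the upper corner $\overline{\pmb{\xi}}=\pmb{b}$. By definition $\pmb{\zeta}$ is the coordinatewise maximizer, i.e. $\zeta_i=\max_{\pmb{\xi}\in\Xi_{\pmb{a},\pmb{b}}}\xi_i=b_i$ for each $i\in[n]$, hence $\pmb{\zeta}=\pmb{b}$ as well. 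Therefore $\pmb{\zeta}=\overline{\pmb{\xi}}$.

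Second, because $\pmb{\zeta}=\overline{\pmb{\xi}}$, we have $\pmb{\zeta}^T\pmb{x}'=\overline{\pmb{\xi}}^T\pmb{x}'$ for every $\pmb{x}'\in\mathcal{X}$, so the hypothesis $\pmb{\zeta}^T\pmb{x}'\leq b\cdot\overline{\pmb{\xi}}^T\pmb{x}'$ of the previous theorem is satisfied with the scalar constant $b=1$, in fact with equality. Invoking that theorem then gives that the minimizer $\pmb{x}'$ of ${\rm CVaR}^{\alpha}_{\mathbb{P}'}[\randxiT\pmb{x}]$ is a $(b\cdot c)$-approximate solution with $b=1$, i.e. a $c$-approximate solution to $\textsc{Distr}~\mathcal{P}$, which is exactly the claim.

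Since the argument is a specialization of an already proved result, there is essentially no obstacle beyond the bookkeeping above; the only point requiring care is the clash of notation between the scalar approximation constant $b$ of the previous theorem and the vector $\pmb{b}$ defining the box, which I would flag explicitly so that the substitution $b=1$ remains unambiguous.
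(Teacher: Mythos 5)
Your proposal is correct and matches the paper's own argument: the paper likewise observes that for the hyperrectangle $\Xi_{\pmb{a},\pmb{b}}$ one has $\pmb{\zeta}=\overline{\pmb{\xi}}$ (both equal to the upper corner $\pmb{b}$), so the preceding theorem applies with its scalar constant equal to $1$, yielding the $c$-approximation. Your explicit remark about the notational clash between the scalar constant and the vector $\pmb{b}$ is a helpful clarification but does not change the substance.
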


The algorithm described in this section can be seen as another method of dealing with the sample $\widehat{\mathbb{P}}^N$. If $N$ is small, it may be advantageous to distort this sample towards some bad realization $\overline{\pmb{\xi}}$ to obtain more robust solutions. Observe that we do not need to know the whole shape of the support set $\Xi$. It is only enough to assume that $\Xi$ is convex and $\overline{\pmb{\xi}}\in \Xi$. One can also try other methods of choosing $\overline{\pmb{\xi}}$. For example $\overline{\pmb{\xi}}$ can minimize a distance to $\pmb{\zeta}$ over all $\pmb{\xi}\in \Xi$. We can also choose an arbitrary value of $c\geq 1$. We then get a heuristic solution whose performance can be compared to other approaches.

\section{Computational tests}
\label{sec5}

In this section, we will show the results of some computational tests. We will investigate the performance of the Wasserstein robust approach for the minimum \textsc{ 0-1 Knapsack} problem, which is a fundamental combinatorial optimization problem often used to test the proposed concepts (see, e.g.~\cite{BS04}). Recall that the deterministic \textsc{ 0-1 Knapsack} is already NP-hard.
 
 We are given a set of items $[n]$. For each item $i\in [n]$ a nonnegative weight $w_i$ is specified and the set of feasible solution is $\mathcal{X}=\{\pmb{x}\in \{0,1\}^n: \sum_{i\in [n]} w_i x_i \geq W\}$, where $W\geq 0$ is a given capacity. In our experiments, each $w_i$ was a random number drawn from the uniform probability distribution in the interval $[0,1]$ (i.e. $w_i\sim \mathcal{U}([0,1])$). The value of $W$ was then set to $0.4\sum_{i\in [n]} w_i$.  The random cost vector $\tilde{\pmb{\xi}}=(\tilde{\xi}_1,\dots,\tilde{\xi}_n)$ of the items was defined as follows. For each component $\tilde{\xi}_i$ of $\tilde{\pmb{\xi}}$ an interval $[\underline{\xi}_i, \overline{\xi}_i]$ was determined, where $\underline{\xi}_i=\max\{0, w_i-\alpha_i\}$,  $\alpha_i\sim \mathcal{U}([0,1])$ and $\overline{\xi}_i=\underline{\xi}_i+2\beta_i$, $\beta_i\sim \mathcal{U}([0,1])$. Then the component $\tilde{\xi}_i$ has a truncated normal distribution in the interval $[\underline{\xi}_i, \overline{\xi}_i]$ with the expected value $w_i$ and the standard deviation $0.7w_i$. All the random variables $\alpha_i, \beta_i$, $i\in [n]$, were mutually independent. The method of creating the instance can be justified by the fact that in practical applications, the items having larger weights also have larger costs. Observe that the support set $\Xi$ is the Cartesian product of $[\underline{\xi}_i, \overline{\xi}_i]$ for $i\in [n]$.
\begin{figure}[h!t]
\includegraphics[height=5.7cm]{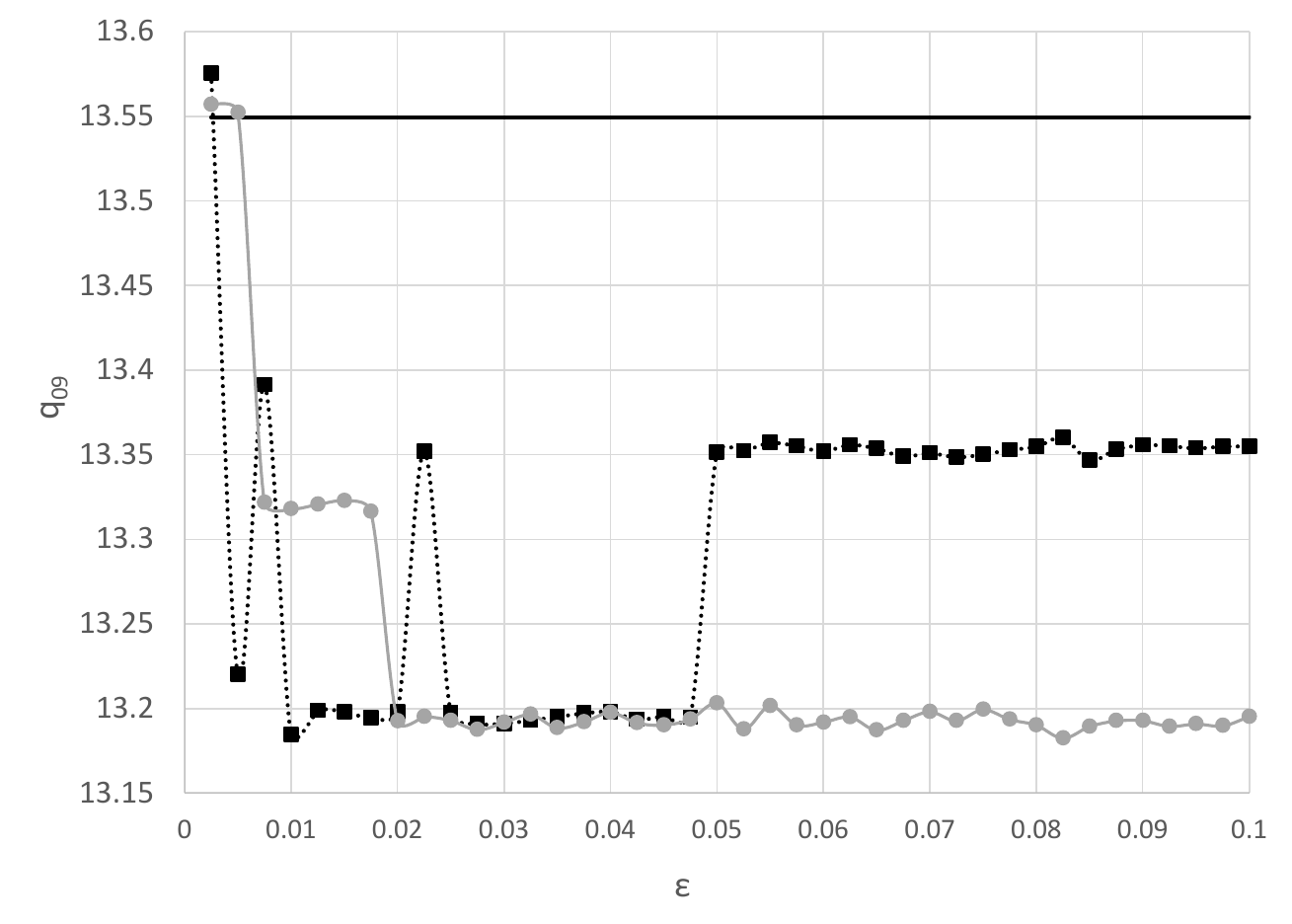}
\includegraphics[height=5.7cm]{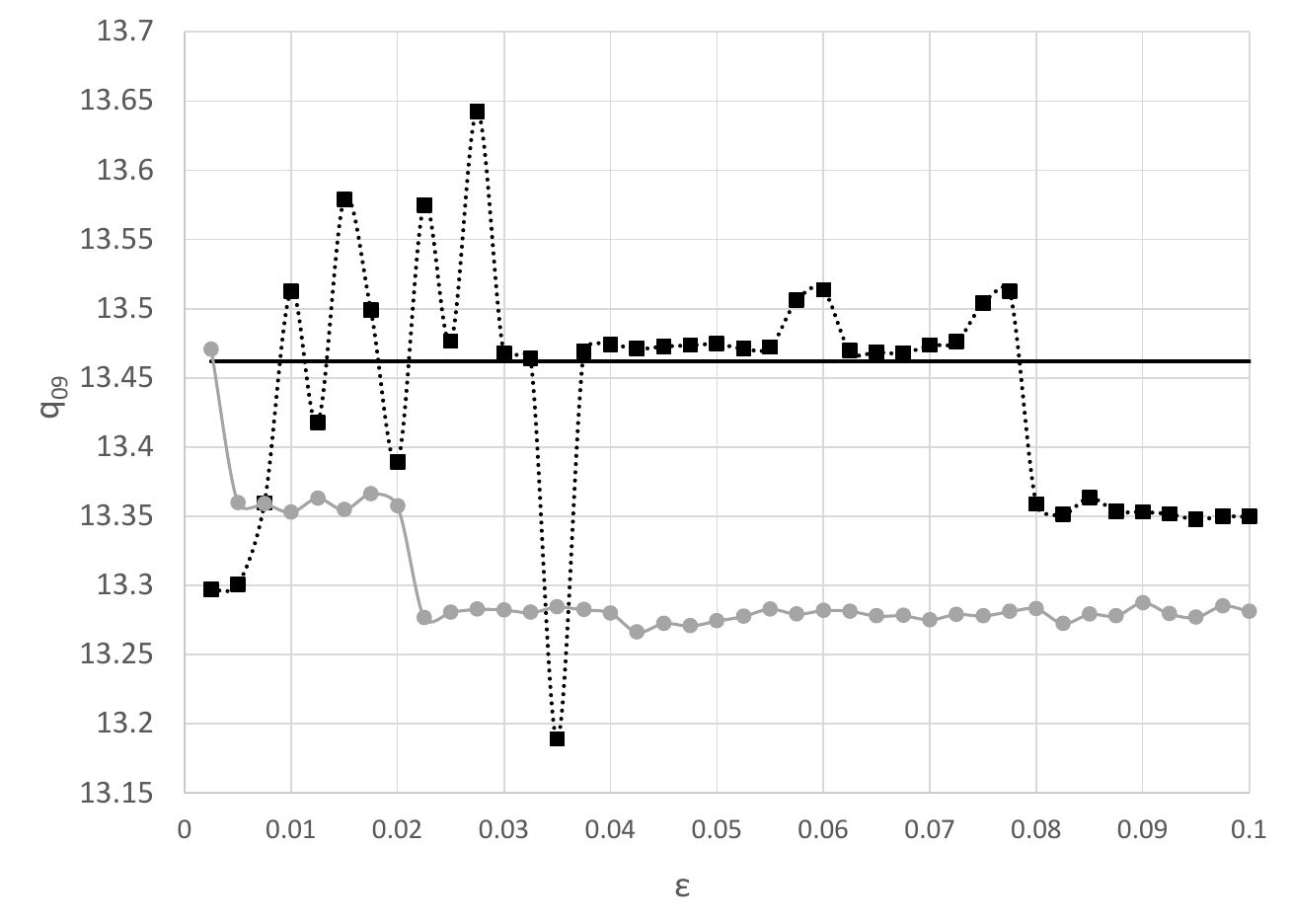}
\includegraphics[height=5.7cm]{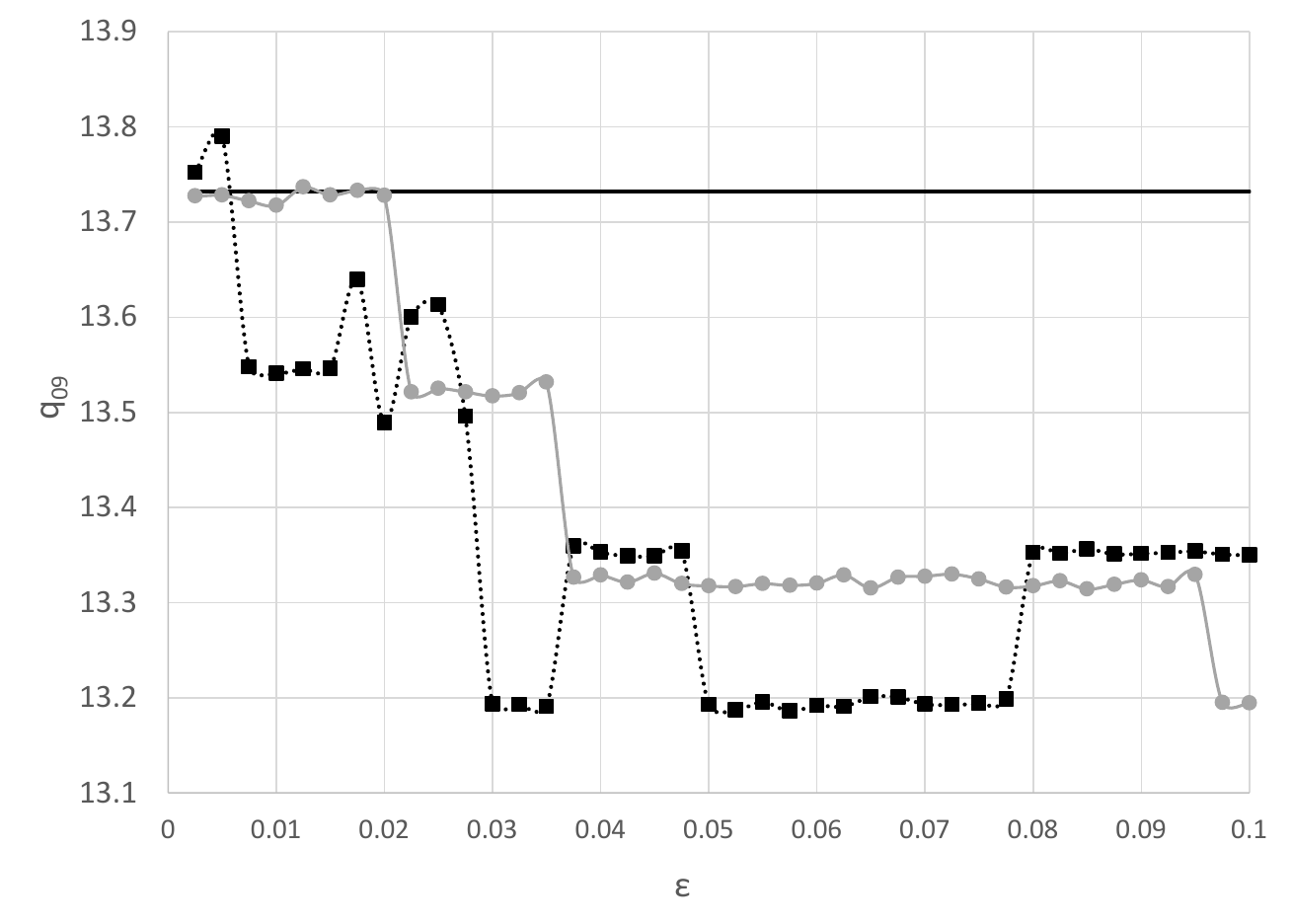}
\includegraphics[height=5.7cm]{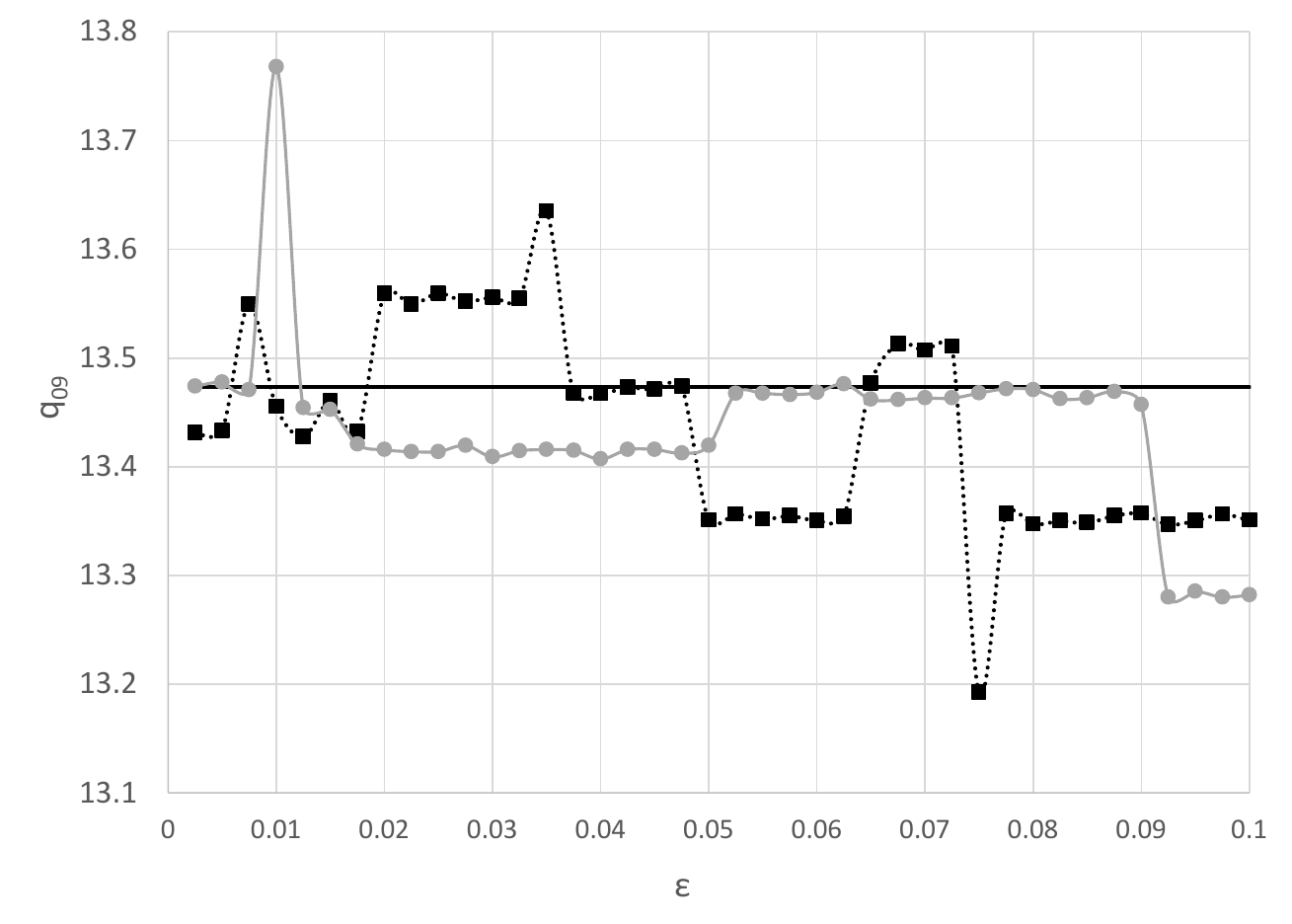}
\caption{The $0.9$-quantiles of $\pmb{x}_i$ (horizontal line), $\pmb{y}_i^{\epsilon}$ (dotted line) and $\pmb{z}_i^{\epsilon}$ (grey line) for four empirical distributions $\widehat{\mathbb{P}}^{N}_1, \dots,  \widehat{\mathbb{P}}^{N}_{4}$ of size~30. }\label{figexp1}
\end{figure}

\begin{figure}[h!t]
\centering
\includegraphics[height=8cm]{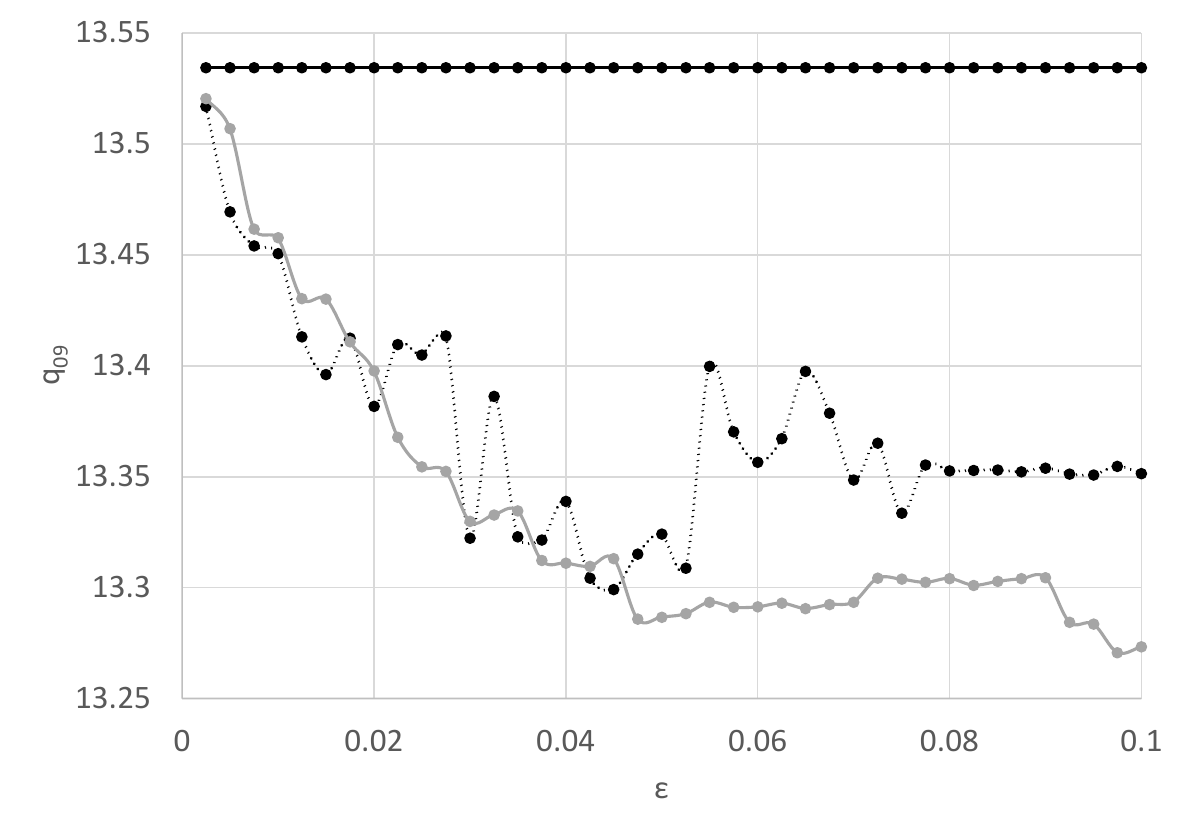}
\caption{The average $0.9$-quantiles of $\pmb{x}_i$ (horizontal line), $\pmb{y}_i^{\epsilon}$ (dotted line) and $\pmb{z}_i^{\epsilon}$ (grey line) for ten empirical distributions $\widehat{\mathbb{P}}^{N}_1, \dots,  \widehat{\mathbb{P}}^{N}_{10}$ of size~30.}
\end{figure}

The first experiment was carried out as follows. We fixed $n=100$ and generated an instance $(\pmb{w},W,\tilde{\pmb{\xi}})$ of the problem. We then generated 10 random samples of size $N=30$ of $\pmb{\tilde{\xi}}$ obtaining ten empirical distributions $\widehat{\mathbb{P}}^{N}_1, \dots,  \widehat{\mathbb{P}}^{N}_{10}$.  For each empirical distribution $\widehat{\mathbb{P}}^{N}_i$, $i=1,\dots,10$, we computed an optimal solution $\pmb{x}_i$ to $\textsc{CVaR}~\mathcal{P}$ with $\alpha=0.1$. We have thus applied the traditional approach based on the sample average approximation. Then for each $\epsilon=0.0025, 0.005, 0.00525,\dots,0.1$ and $i=1,\dots,10$, we computed an optimal solution $\pmb{y}_i^{\epsilon}$ to $\text{Distr}~\mathcal{P}$ with $\mathcal{B}_{\epsilon}(\widehat{\mathbb{P}}_i^N))$  using the row generation algorithm and a solution $\pmb{z}_i^{\epsilon}$ using the approximation algorithm proposed in Section~\ref{sec4_4}. We also chose $\alpha=0.1$ and $p=\infty$ in the definition of the Wasserstein ball. We now face the problem of comparing the solutions $\pmb{x}_i$ to $\pmb{y}_i^{\epsilon}$ and $\pmb{z}_i^{\epsilon}$.  The cost $\randxiT\pmb{x}$ of a given solution $\pmb{x}$ is a random variable. The robustness of $\pmb{x}$ can be measured by computing the $0.9$-quantile $q_{0.9}$ of  $\randxiT\pmb{x}$ - with probability $0.9$, the largest cost of $\pmb{x}$ will not exceed $q_{0.9}$. The solutions with smaller values of $q_{0.9}$ will be regarded as more robust. For a given solution, the quantile $q_{0.9}$ can be approximated by generating a very large sample of $\tilde{\pmb{\xi}}$. In our experiments, we chose a sample size of $100~000$ for this purpose.

Figure~\ref{figexp1} shows four representative examples among the ten generated. The horizontal line represents the $q_{0.9}$ quantile of $\pmb{x}_i$. The dotted line is the $q_{0.9}$ quantile of $\pmb{y}_i^{\epsilon}$ and the grey line is is the $q_{0.9}$ quantile of $\pmb{z}_i^{\epsilon}$ for each tested $\epsilon$. The performance of the solutions depends on the sample. In almost all cases, the performance of $\pmb{z}_i^{\epsilon}$, computed by the approximation algorithm, was better than $\pmb{x}_i$. For the solution $\pmb{y}_i^{\epsilon}$, the conclusion is not so decisive. In most cases, we also get better solutions, but sometimes the performance of $\pmb{z}_i^{\epsilon}$ was worse than $\pmb{x}_i$, especially when the solution $\pmb{x}_i$ has a smaller value of $q_{0.9}$. Figure~\ref{figexp1} shows the average performance (average values of $q_{0.9}$ of solutions) over all~10 samples.

In the second experiment, we focused on the approximation algorithm constructed in Section~\ref{sec4_4}. We again fixed $n=100$ and generated an instance $(\pmb{w},W,\tilde{\pmb{\xi}})$ of the problem. We then generated 10 random samples of size $N=30$ of $\pmb{\tilde{\xi}}$ obtaining the empirical distributions $\widehat{\mathbb{P}}^{N}_1, \dots,  \widehat{\mathbb{P}}^{N}_{10}$. We fixed $\alpha=0.5$. For this choice of $\alpha$, the value of $l$ in the model~(\ref{f0}) is equal to~15, and the model turned out to be hard to solve. Therefore, we only applied the approximation algorithm for the tested instances. For each empirical distribution $\widehat{\mathbb{P}}^{N}_i$, $i=1,\dots,10$, we computed an optimal solution $\pmb{x}_i$ to $\textsc{CVaR}~\mathcal{P}$.  Then for each $\epsilon=0.0025, 0.005, 0.00525,\dots,1$ and $i=1,\dots,10$, we computed  a solution $\pmb{z}_i^{\epsilon}$ using the approximation algorithm proposed in Section~\ref{sec4_4}. We choose $p=\infty$ in the definition of the Wasserstein ball. As in the previous experiment, the robustness of $\pmb{x}_i$ and $\pmb{z}_i^{\epsilon}$ were evaluated by computing the $q_{0.9}$ quantiles for each solution, using a simulation on a very large sample. The results for four representative cases are shown in Figure~\ref{figexp3}. In the majority of cases, the performance of $\pmb{z}_i^{\epsilon}$ was better than $\pmb{x}_i$, except for the one case when the performance of $\pmb{x}_i$ itself was good and the performance of $\pmb{z}_i^{\epsilon}$ was similar. Interestingly, in all the cases, the value of $\epsilon$ should not be too small and too large. For $\epsilon\geq 0.9$ a large deterioration in the performance of $\pmb{z}_i^{\epsilon}$ was observed. The average performance of the approximation algorithm over ten random samples is shown in Figure~\ref{figexp4}. On average, the best choice of $\epsilon$ for the tested instances was between $0.1$ and $0.7$.

\begin{figure}[h!t]
\includegraphics[height=5.7cm]{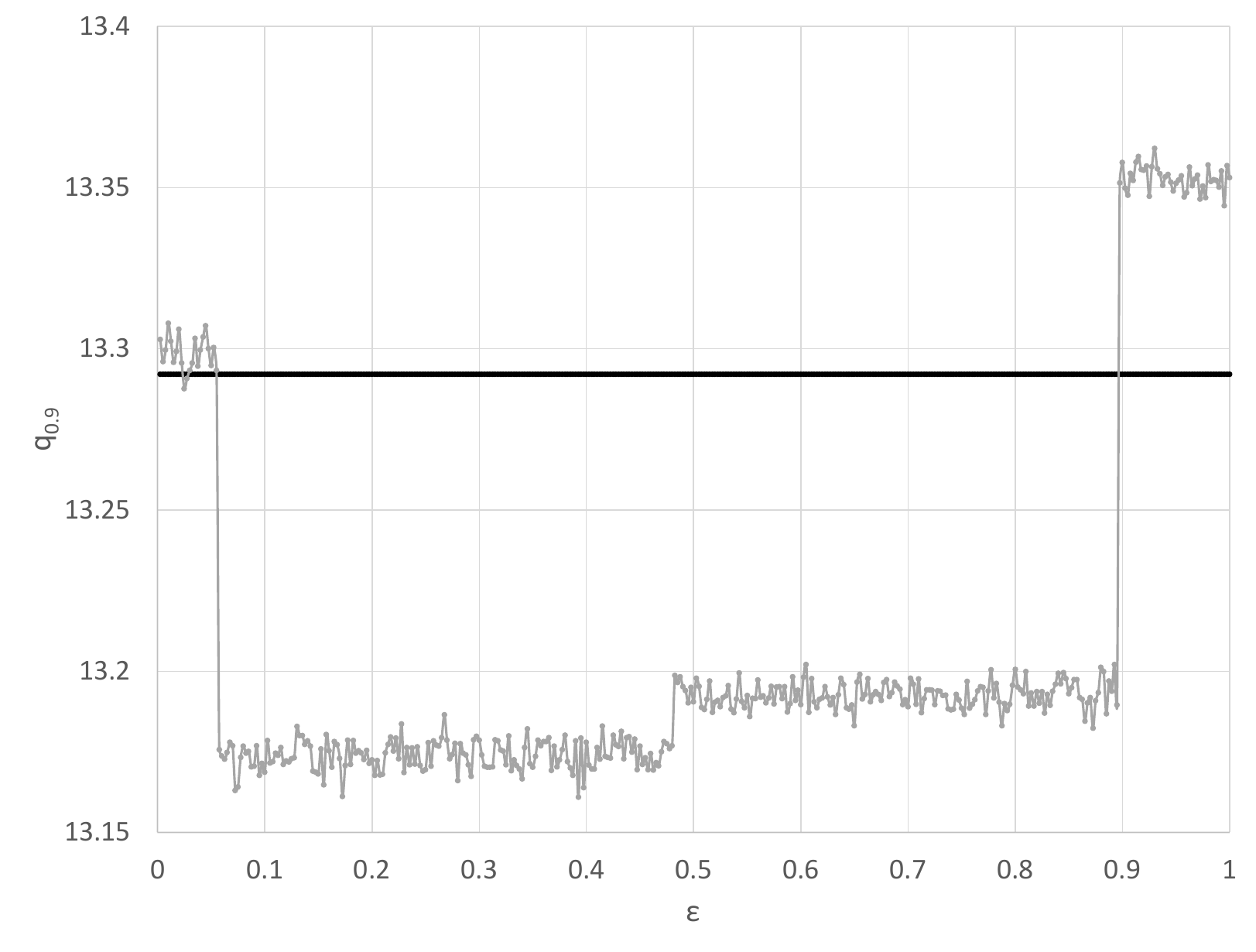}
\includegraphics[height=5.7cm]{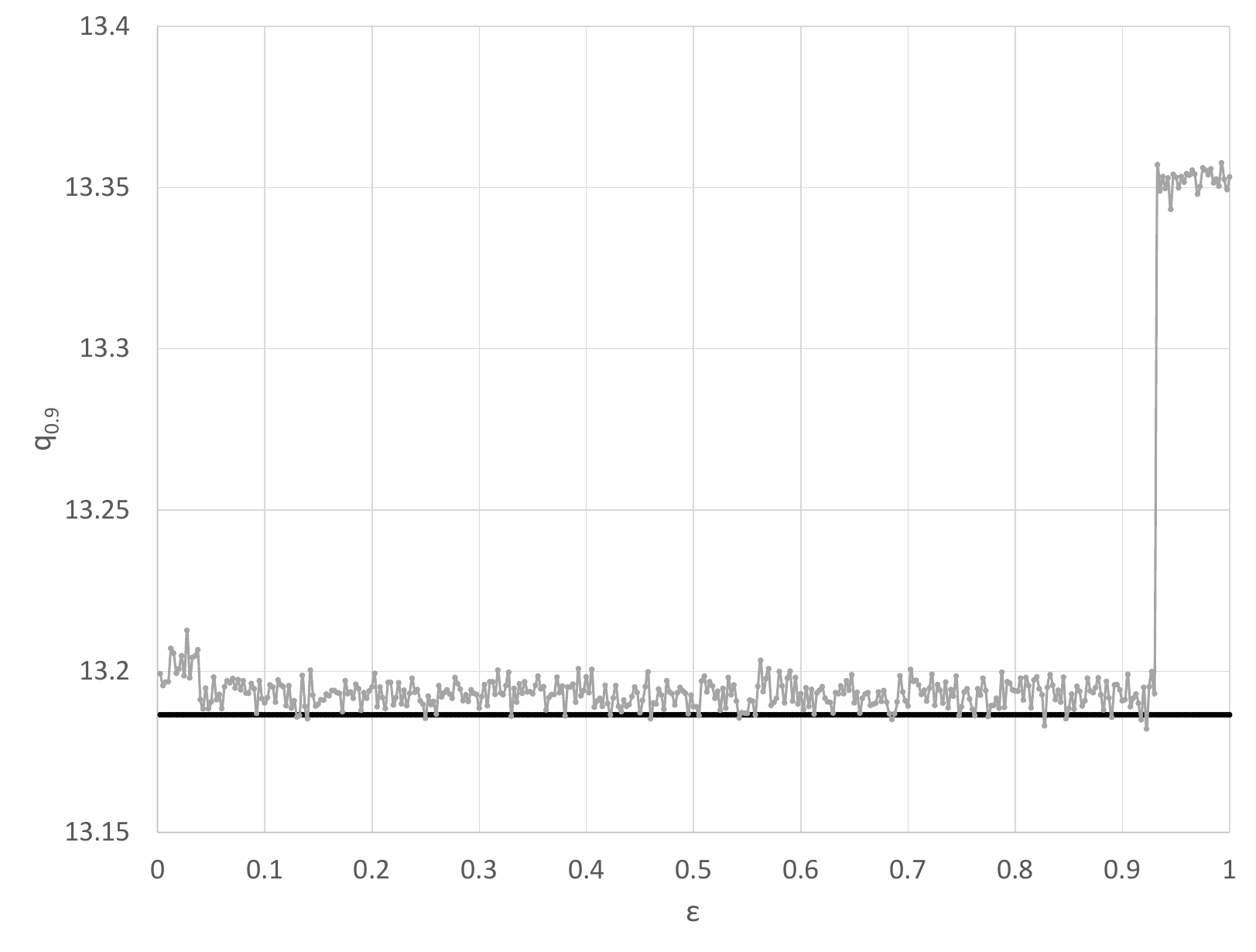}
\includegraphics[height=5.7cm]{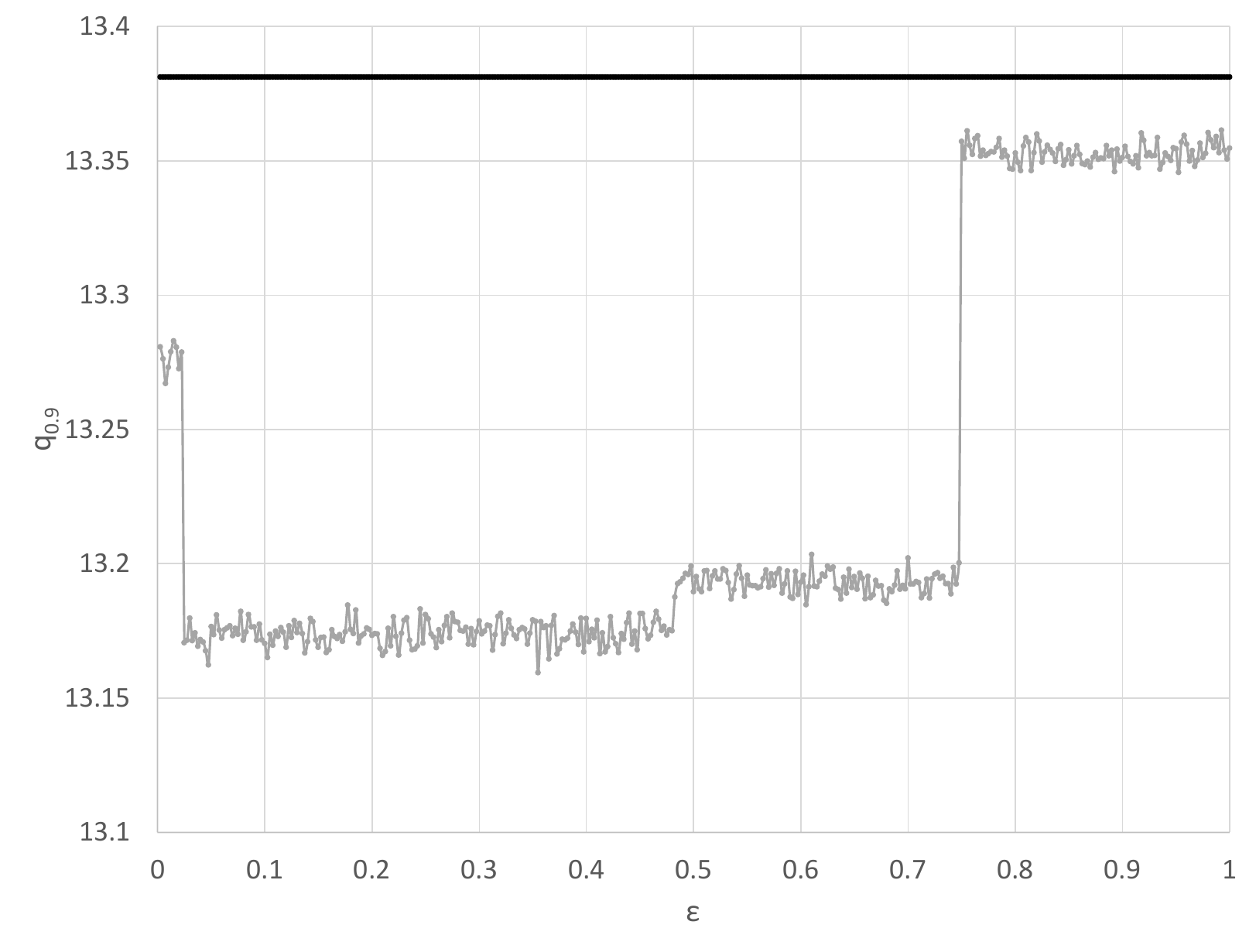}
\includegraphics[height=5.7cm]{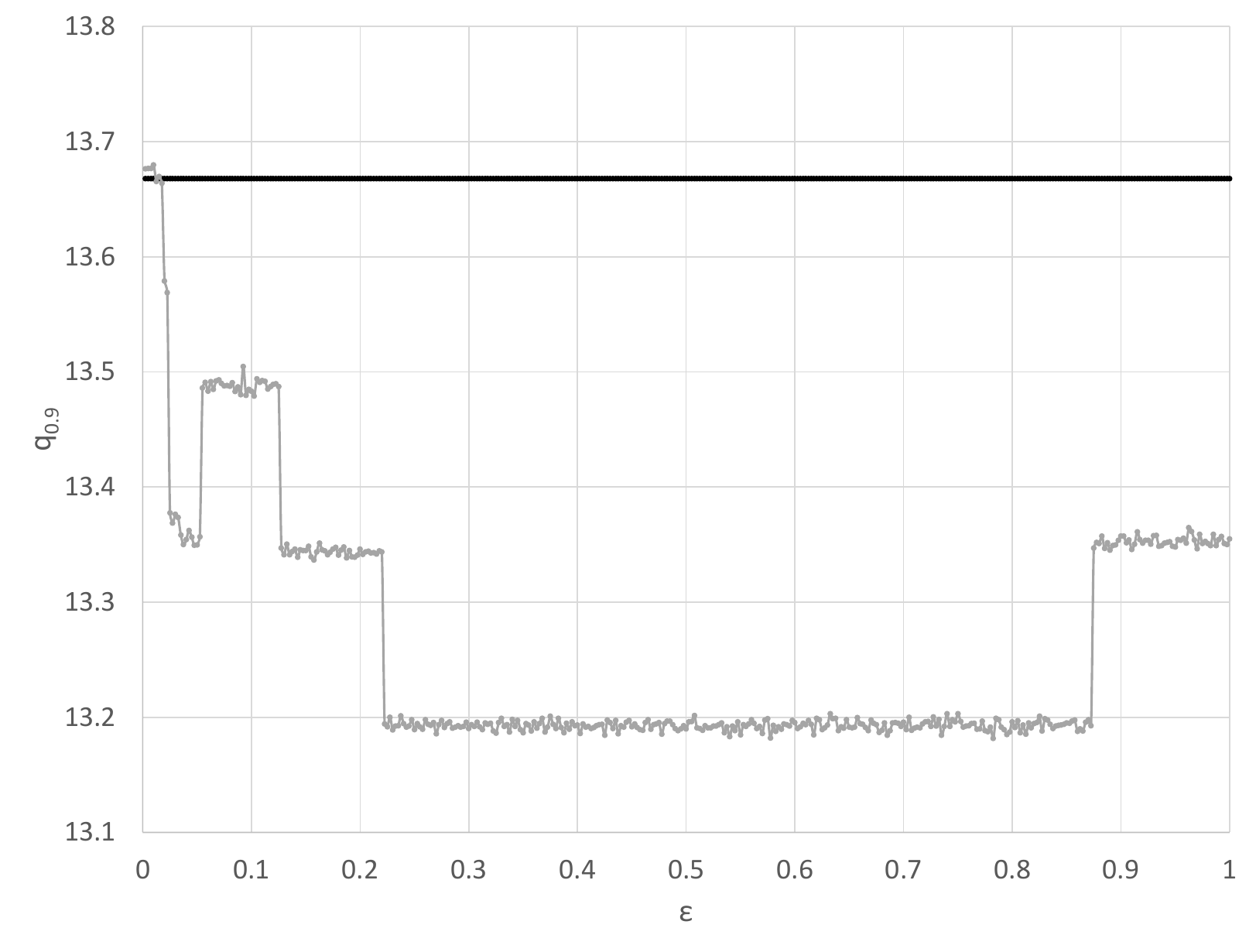}
\caption{The $0.9$-quantiles of $\pmb{x}_i$ (horizontal line) and $\pmb{z}_i^{\epsilon}$ (grey line) for four empirical distributions $\widehat{\mathbb{P}}^{N}_1, \dots,  \widehat{\mathbb{P}}^{N}_{4}$ of size~30. }\label{figexp3}
\end{figure}

\begin{figure}[h!t]
\centering
\includegraphics[height=8cm]{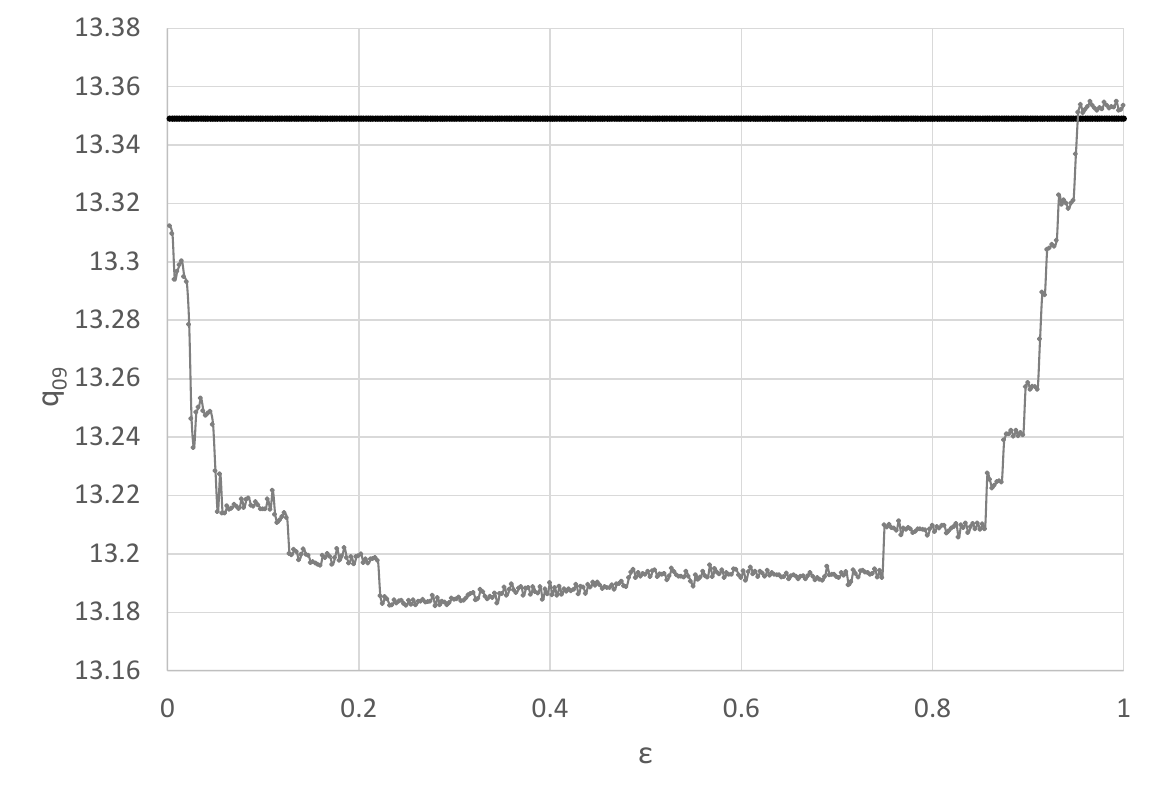}
\caption{The average $0.9$-quantiles of $\pmb{x}_i$ (horizontal line) and $\pmb{z}_i^{\epsilon}$ (grey line) for ten empirical distributions $\widehat{\mathbb{P}}^{N}_1, \dots,  \widehat{\mathbb{P}}^{N}_{10}$ of size~30.} \label{figexp4}
\end{figure}

\section{Conclusions}

In this paper, we discussed the class of combinatorial optimization problems with uncertain costs in the objective function. Assuming that a sample of cost realizations is available, we used the Wasserstein distance to define a set of admissible probability distributions for the random costs vector. We then seek a solution minimizing the CVaR risk measure for the worst probability distribution. We showed that the problem is strongly NP-hard for basic combinatorial optimization problems (in particular for the shortest path problem), except for the case when CVaR becomes the expectation. We showed some exact and approximation methods of solving the problem. In particular, we proposed a row generation algorithm which can be applied to arbitrary convex support sets. We also constructed an approximation algorithm whose idea is to minimize CVaR for an appropriately distorted sample.  Some computational tests indicated that the distributionally robust approach, based on the Wasserstein distance, may give better results than the sample average approximation.

An interesting open problem is the complexity of computing a worst probability distribution for a given solution. In this paper, we used a mixed integer programming formulation for this purpose, which, however, can be complex to solve for larger instances. More efficient algorithms for particular support sets (for example, ellipsoidal ones) should be constructed. Also, the proposed algorithms should be tested on other combinatorial problems, possibly considering real data.

\subsubsection*{Acknowledgements}
The authors were supported by
 the National Science Centre, Poland, grant 2022/45/B/HS4/00355.


\appendix

\section{Appendix}
\label{dod}  

\begin{proof}[Proof of Lemma~\ref{lemworst}]
The proof is similar in spirit to~\cite[Lemma~2]{WYSZ19}.
Since the function $t + \frac{1}{\alpha} \int_{\Xi}[\pmb{\xi}^T\pmb{x}-t]_{+}\mathbb{P}({\rm d}\pmb{\xi})$ is convex in $t$ and linear in $\mathbb{P}({\rm d}\pmb{\xi})$ we can write:
\begin{align}
  \sup_{\mathbb{P}\in \mathbb{B}_{\epsilon}(\widehat{\mathbb{P}}^N)} {\rm CVaR}^{\alpha}_{\mathbb{P}}[\pmb{\xi}^T\pmb{x}] & =\sup_{\mathbb{P}\in \mathbb{B}_{\epsilon}(\widehat{\mathbb{P}}^N)}
   \inf\left\{t \in \Rset : t + \frac{1}{\alpha} \int_{\Xi}[\pmb{\xi}^T\pmb{x}-t]_{+}\mathbb{P}({\rm d}\pmb{\xi})\right\} \nonumber\\
&  =\inf \left\{t  \in \Rset : t + \frac{1}{\alpha} \sup_{\mathbb{P}\in \mathbb{B}_{\epsilon}(\widehat{\mathbb{P}}^N)}\int_{\Xi}[\pmb{\xi}^T\pmb{x}-t]_{+}\mathbb{P}({\rm d}\pmb{\xi})\right\}. \label{lb2}
\end{align}
If $\mathbb{P}\in \mathbb{B}_{\epsilon}(\widehat{\mathbb{P}}^N)$, then $\mathbb{P}({\rm d}\pmb{\xi})=\sum_{i=1}^N  \Pi({\rm d}\pmb{\xi},\hat{\pmb{\xi}}_i)$ and we can express the inner optimization problem in~(\ref{lb2}) as 

\begin{equation}
\label{lb3}
	\begin{array}{lll}
		 v^*=\sup & \displaystyle \int_{\Xi} \sum_{i=1}^N [\pmb{\xi}^T\pmb{x}-t]_{+}\Pi({\rm d}\pmb{\xi},\hat{\pmb{\xi}}_i)\\
				 & \displaystyle \int_{\Xi} \Pi({\rm d}\pmb{\xi},\hat{\pmb{\xi}}_i)=\frac{1}{N}, & i\in [N],\\
				 &\displaystyle  \int_{\Xi} \sum_{i=1}^N ||\pmb{\xi}-\hat{\pmb{\xi}}_i||_{q} \Pi({\rm d}\pmb{\xi},\hat{\pmb{\xi}}_i)\leq \epsilon,\\
	& \Pi({\rm d}\pmb{\xi},\hat{\pmb{\xi}}_i)\geq 0, & i\in [N].
	\end{array}
\end{equation}
%
The dual to~(\ref{lb3}) is
$$
	\begin{array}{lll}
			u^*=\inf & \displaystyle \frac{1}{N}\sum_{i=1}^N s_i+\lambda\epsilon \\
				& [\pmb{\xi}^T\pmb{x}-t]_{+}-\lambda||\pmb{\xi}-\hat{\pmb{\xi}}_i||_{q} \leq s_i, & \forall\pmb{\xi}\in \Xi, i\in [N],\\
				&\lambda\geq 0,
	\end{array}
$$
where $\lambda\geq 0$ and $s_1,\dots,s_N\in \mathbb{R}$ are Lagrangian multipliers,
which can be equivalently rewritten as follows
$$u^*=\inf_{\lambda \geq 0} \sup_{\pmb{\xi}_1,\dots,\pmb{\xi}_N\in \Xi} \frac{1}{N}\sum_{i=1}^N ([\pmb{\xi}_i^T\pmb{x}-t]_{+}-\lambda||\pmb{\xi}_i-\hat{\pmb{\xi}}_i||_{q})+\lambda \epsilon=$$

\begin{equation}
\label{lb4}
\inf_{\lambda \geq 0} \sup_{\pmb{\xi}_1,\dots,\pmb{\xi}_N\in \Xi} \frac{1}{N}\sum_{i=1}^N [\pmb{\xi}_i^T\pmb{x}-t]_{+}-\lambda \left(\frac{1}{N}\sum_{i=1}^N ||\pmb{\xi}_i-\hat{\pmb{\xi}}_i||_{q}- \epsilon\right).
\end{equation}
Problem~(\ref{lb4}) is equivalent to

$$
	\begin{array}{lll}
			 \displaystyle u^*=\sup & \displaystyle  \frac{1}{N}\sum_{i=1}^N [\pmb{\xi}_i^T\pmb{x}-t]_{+} \\
				 & \displaystyle \frac{1}{N}\sum_{i=1}^N ||\pmb{\xi}_i-\hat{\pmb{\xi}}_i||_{q}\leq \epsilon \\
				 & \pmb{\xi}_1,\dots,\pmb{\xi}_N\in \Xi
	\end{array}
$$
The set of feasible solutions to this problem is closed and bounded, and the objective function is continuous. Therefore, the supremum is attained. Furthermore, the set of feasible solutions is exactly $\mathcal{B}_{\epsilon}(\widehat{\mathbb{P}}^N)$. Hence

\begin{equation}
\label{lb5}
u^*=\max_{(\pmb{\xi}_1,\dots,\pmb{\xi}_N)\in \mathcal{B}_{\epsilon}(\widehat{\mathbb{P}}^N)}\frac{1}{N}\sum_{i=1}^N [\pmb{\xi}_i^T\pmb{x}-t]_{+}.
\end{equation}
By weak duality, we get $u^*\geq v^*$ and by~(\ref{lb2}) we have
\begin{equation}
\label{lb6}
\sup_{\mathbb{P}\in \mathbb{B}_{\epsilon}(\widehat{\mathbb{P}}^N)} {\rm CVaR}^{\alpha}_{\mathbb{P}}[\randxiT\pmb{x}]\leq \inf\left\{t \in \Rset : t+\frac{1}{\alpha} \max_{\mathbb{P}\in \mathcal{B}_{\epsilon}(\widehat{\mathbb{P}}^N)}\frac{1}{N}\sum_{i=1}^N [\pmb{\xi}_i^T\pmb{x}-t]_{+} \right\}=\max_{\mathbb{P}\in \mathcal{B}_{\epsilon}(\widehat{\mathbb{P}}^N)} {\rm CVaR}^{\alpha}_{\mathbb{P}}[\randxiT\pmb{x}].
\end{equation}
It is easy to see that $\mathcal{B}_{\epsilon}(\widehat{\mathbb{P}}^N)\subseteq  \mathbb{B}_{\epsilon}(\widehat{\mathbb{P}}^N)$. Indeed, for $\mathbb{P}^N=(\pmb{\xi}_1,\dots,\pmb{\xi}_N)\in  \mathcal{B}_{\epsilon}(\widehat{\mathbb{P}}^N)$, consider the joint probability distribution $\Pi=((\pmb{\xi}_1,\hat{\pmb{\xi}}_1),\dots,(\pmb{\xi}_N,\hat{\pmb{\xi}}_N))$ in $\Xi \times \Xi$. Then 
$$\int_{\Xi \times \Xi} ||\pmb{\xi}_1-\pmb{\xi}_2||_q \Pi({\rm d} \pmb{\xi}_1, {\rm d}\pmb{\xi}_2)=\frac{1}{N}\sum_{i=1}^N ||\pmb{\xi}_i-\hat{\pmb{\xi}}_i||_{q}\leq \epsilon$$
and $d_W(\widehat{\mathbb{P}}^N, \mathbb{P}^N)\leq \epsilon$. Hence, and by~(\ref{lb6}), we obtain
$$
\sup_{\mathbb{P}\in \mathbb{B}_{\epsilon}(\widehat{\mathbb{P}}^N)} {\rm CVaR}^{\alpha}_{\mathbb{P}}[\randxiT\pmb{x}]=\max_{\mathbb{P}\in \mathcal{B}_{\epsilon}(\widehat{\mathbb{P}}^N)} {\rm CVaR}^{\alpha}_{\mathbb{P}}[\randxiT\pmb{x}].
$$
\end{proof}

\begin{proof}[Proof of Theorem~\ref{propunr}]
Assume that $\alpha\in (\frac{l-1}{N},\frac{l}{N}]$ for some $l\in [N]$.
For the support set $\Xi=\Rset^n_{+}$ we get 
\begin{equation}
\label{defbu}
\mathcal{B}_{\epsilon}(\widehat{\mathbb{P}}^N)=\{(\hat{\pmb{\xi}}_1+\pmb{\Delta}_1,\dots, \hat{\pmb{\xi}}_N+\pmb{\Delta}_N): \sum_{i=1}^N||\pmb{\Delta}_i||_{q}\leq N\epsilon,\; \pmb{\Delta}_i\in \mathbb{R}^n, i\in [N]\}.
\end{equation}
Let us fix $\pmb{x}\in \mathcal{X}$ and assume that
$$\hat{\pmb{\xi}}^T_{\sigma(1)}\pmb{x}\geq \hat{\pmb{\xi}}^T_{\sigma(2)}\pmb{x}\geq \dots\geq \hat{\pmb{\xi}}^T_{\sigma(N)}\pmb{x}.$$
Let $\mathbb{P}^N=(\hat{\pmb{\xi}}_1+\pmb{\Delta}_1,\dots,\hat{\pmb{\xi}}_N+\pmb{\Delta}_N)\in \mathcal{B}_{\epsilon}(\widehat{\mathbb{P}}^N)$ be a worst probability distribution for $\pmb{x}$ such that 
$(\hat{\pmb{\xi}}_{\sigma'(1)}+\pmb{\Delta}_{\sigma'(1)})^T\pmb{x}\geq\dots\geq (\hat{\pmb{\xi}}_{\sigma'(N)}+\pmb{\Delta}_{\sigma'(N)})^T\pmb{x}$.
Then, using Proposition~\ref{propowa}
\begin{equation}
\label{h0}
{\rm CVaR}^{\alpha}_{\mathbb{P}^N}[\randxiT\pmb{x}]=\sum_{i=1}^N w_i (\hat{\pmb{\xi}}_{\sigma'(i)}+\pmb{\Delta}_{\sigma'(i)})^T\pmb{x}\leq \sum_{i=1}^N w_i (\hat{\pmb{\xi}}_{\sigma(i)}+\pmb{\Delta}_{\sigma'(i)})^T\pmb{x}\leq {\rm CVaR}^{\alpha}_{\mathbb{P}^{*N}}[\randxiT\pmb{x}],
\end{equation}
where $\mathbb{P}^{*N}$ is the probability distribution,  in which
 $\pmb{\xi}^*_{\sigma(i)}=\hat{\pmb{\xi}}_{\sigma(i)}+\pmb{\Delta}_{\sigma'(i)}$, $i\in [N]$. The last inequality in~(\ref{h0}) follows from the fact that $w_1\geq w_2\geq\dots\geq w_N$. It is easy to see that  $\mathbb{P}^{*N}\in\mathcal{B}_{\epsilon}(\widehat{\mathbb{P}}^N)$ and, according to~(\ref{h0}), $\mathbb{P}^{*N}$ is also a worst probability distribution for $\pmb{x}$.
 Therefore, we get
\begin{equation}
\label{e0002}
\max_{\mathbb{P}^N\in\mathcal{B}_{\epsilon}(\widehat{\mathbb{P}}^N)}{\rm CVaR}^{\alpha}_{\mathbb{P}^N}[\randxiT\pmb{x}]= \left\{
\begin{array}{llll}
		\max & \displaystyle \sum_{i=1}^N w_i(\hat{\pmb{\xi}}_{\sigma(i)}+\pmb{\Delta}_{i})^T\pmb{x} \\
		&\displaystyle \sum_{i=1}^N ||\pmb{\Delta}_i||_{q} \leq \epsilon N,\\
		& \pmb{\Delta}_i\in \Rset^n, & i\in [N].
\end{array} \right.
\end{equation}
 Since, by Proposition~\ref{propowa}, $\sum_{i=1}^N w_i \hat{\pmb{\xi}}_{\sigma(i)}^T\pmb{x}={\rm CVaR}^{\alpha}_{\widehat{\mathbb{P}}^N}[\randxiT\pmb{x}]$,
\begin{equation}
\label{g0}
\max_{\mathbb{P}^N\in\mathcal{B}_{\epsilon}(\widehat{\mathbb{P}}^N)}{\rm CVaR}^{\alpha}_{\mathbb{P}^N}[\randxiT\pmb{x}]={\rm CVaR}^{\alpha}_{\widehat{\mathbb{P}}^N}[\randxiT\pmb{x}]+\left\{ \begin{array}{llll}
				\max  & \displaystyle \sum_{i=1}^{N} w_i\pmb{\Delta}_{i}^T\pmb{x}\\
				& \displaystyle \frac{1}{N}\sum_{i=1}^N ||\pmb{\Delta}_i||_{q} \leq \epsilon,\\
				&\pmb{\Delta}_i\in \Rset^n, & i\in [N].
				\end{array}\right.			
\end{equation}
Let us focus on the inner maximization problem in~(\ref{g0}). The Lagrangian function with Lagrangian multiplier $v\geq 0$ for this problem takes the following form:
$$
L(\pmb{\Delta}_1,\dots,\pmb{\Delta}_N,v)= \sum_{i=1}^{N} w_i \pmb{\Delta}_{i}^T\pmb{x}-v(\frac{1}{N}\sum_{i=1}^N ||\pmb{\Delta}_i||_{q} - \epsilon)=
$$

$$
\sum_{i=1}^{l} (w_i \pmb{\Delta}_{i}^T\pmb{x}-v\frac{1}{N}||\pmb{\Delta}_i||_{q})-v\frac{1}{N}\sum_{i=l+1}^N ||\pmb{\Delta}_i||_{q}+v\epsilon,
$$
which follows from the fact that $w_{l+1}=\dots=w_N=0$.
We now show that
\begin{equation}
\label{e0003}
\sup_{\pmb{\Delta}_i} (w_i\pmb{\Delta}_i^T\pmb{x}-v\frac{1}{N}||\pmb{\Delta}_i||_{q})=\left\{\begin{array}{lll} 
					0 & \text{if} & Nw_i||\pmb{x}||_{q'}\leq v,\\
					\infty & \text{if} & Nw_i||\pmb{x}||_{q'}> v.
			\end{array}\right.
\end{equation}
\noindent Indeed, from the definition of the dual norm (see, e.g.,~\cite{BV04}), we get for each $t\geq 0$
$$t||\pmb{x}||_{q'}=\sup\{\pmb{\Delta}^T_i\pmb{x}: ||\pmb{\Delta}_i||_q\leq t\}$$
and, because $v\geq 0$
$$\sup_{\pmb{\Delta}_i} (w_i\pmb{\Delta}_i^T\pmb{x}-v\frac{1}{N}||\pmb{\Delta}_i||_{q})=\sup_{t\geq 0}\sup_{||\pmb{\Delta}_i||_{q}\leq t} (w_i\pmb{\Delta}_i^T\pmb{x}-v\frac{1}{N}t)=$$
$$\sup_{t\geq 0} (w_i t ||\pmb{x}||_{q'}-v\frac{1}{N}t)=\sup_{t\geq 0} t (w_i  ||\pmb{x}||_{q'}-v\frac{1}{N}),$$
which implies~(\ref{e0003}). Hence
$$\inf_v\sup_{\pmb{\Delta}_1,\dots,\pmb{\Delta}_N}L(\pmb{\Delta}_1,\dots,\pmb{\Delta}_N,v)=\left\{
\begin{array}{lll}
	v\epsilon & \text{if} & Nw_i||\pmb{x}||_{q'} \leq v,\; i\in [l],\\
	\infty & \text{otherwise}.
\end{array}\right.
$$
The optimization problem in~(\ref{g0}) is a convex one that satisfies the Slater condition for each $\epsilon>0$.  Thus, by strong duality
$${\rm CVaR}^{\alpha}_{\mathbb{P}^N}[\randxiT\pmb{x}]={\rm CVaR}^{\alpha}_{\widehat{\mathbb{P}}^N}[\randxiT\pmb{x}]+\max_{i\in [l]} Nw_i\epsilon||\pmb{x}||_{q'}.$$
Finally, using the definition of the weights in Proposition~\ref{propowa}, we obtain
${\rm CVaR}^{\alpha}_{\widehat{\mathbb{P}}^N}[\pmb{\xi}^T\pmb{x}]={\rm CVaR}^{\alpha}_{\widehat{\mathbb{P}}^N}[\pmb{\xi}^T\pmb{x}]+
		N\epsilon||\pmb{x}||_{q'}$ if $\alpha\leq \frac{1}{N}$ and ${\rm CVaR}^{\alpha}_{\mathbb{P}^N}[\randxiT\pmb{x}]={\rm CVaR}^{\alpha}_{\widehat{\mathbb{P}}^N}[\randxiT\pmb{x}]+
		\frac{1}{\alpha}\epsilon||\pmb{x}||_{q'}$, otherwise. This yields~(\ref{mincvd2}).

\end{proof}

\end{document}